\documentclass{article}

\usepackage{graphicx,xcolor,float}
\usepackage{hyperref,enumerate,enumitem}
\usepackage{amsthm,amsmath,amssymb}
\usepackage{tikz,comment}
\usepackage{caption,subcaption}
\usepackage[T1]{fontenc}

\newtheorem{theorem}{Theorem}[section]
\newtheorem{lemma}[theorem]{Lemma}
\newtheorem{definition}{Definition}[section]
\newtheorem{corollary}{Corollary}[theorem]
\newtheorem{claim}{Claim}[theorem]
\newtheorem{proposition}[theorem]{Proposition}

\newtheorem*{observation}{Observation}

\newtheorem*{remark}{Remark}
\newtheorem*{question}{Question}

\title{Dimension of Generic Reals}
\author{Yiping Miao\footnote{The author wants to thank Theodore Slaman, Sean Cody and many other people (especially various people the author talked to in Singapore Summer School 2025) for the helpful conversations and questions around the paper.}}
\date{}

\begin{document}

\maketitle

\begin{abstract}
This paper investigates the Hausdorff measure of certain sets of generics in computability theory. Let $\Gamma$ be the Turing ideal in which we take the dense open sets. The set of $\Gamma$-Cohen generics  has measure positive if and only if the gauge function is not dominated by every element in $\Gamma$, under some mild restrictions on the gauge function. The set of $\Gamma$-Mathias generics and the set of $\Gamma$-Sacks generics have measure positive if and only if the gauge function eventually dominates every element in $\Gamma$. This gives some comparison between the behavior of reals in the set and the measure of the set.
\end{abstract}

\tableofcontents

\section{Introduction}

There are two orthogonal notions of smallness: null sets and meager sets. A set is \textit{comeager} if it contains a countable intersection of dense open sets, a set is \textit{meager} if its complement is comeager. A typical element of a \textit{conull} set (complement of a null set) is a random real, while a typical element of a comeager set is a generic real.

There are comeager null sets. For example, a real $x$ is \textit{$\omega$-generic} if it is contained in every dense open set that is arithmetic definable. The set of $\omega$-generics is comeager and has Hausdorff dimension $0$. A natural question to ask is if we can characterize the `smallness' of this set. There are previous research on Liouville numbers characterizing the gauge profile of the set of Liouville numbers\cite{olsen2005exacti,olsen2006exact}. We follow the same line of research for $\omega$-generics.

We can consider a broader measure notion called gauge measures. A function $f:\mathbb{R}^+\rightarrow\mathbb{R}^+$ is a \textit{gauge function} if it is non-decreasing, right-continuous and $\lim_{x\rightarrow 0+}f(x)=0$. It takes in some $x$ and outputs the measure of a `$f$-dimension' basic open set with diameter $x$. For example, $f(x)=\pi x^2/4$ is a gauge function that takes in $x$ and outputs the measure of a $2$-dimensional disc with diameter $x$, which is the classical measure notion in a plane. Gauge functions give us finer control of measure with regard to diameters.

Since we work in Cantor space $2^\omega$ in this paper, we only need to consider the value of the gauge function on $\{2^{-k}:k\in\omega\}$, so we can code a gauge function using a real. When we talk about the complexity of a gauge function, we mean the complexity of the real coding this gauge function.

There are various ways to compare two gauge functions $g_0,g_1$. In this paper we compare the literal values. 

\begin{definition}
For gauge functions $g_0,g_1$,
\begin{enumerate}
    \item $g_0$ dominates $g_1$ if for every $x$, $g_0(x)\geq g_1(x)$.
    \item $g_0$ eventually dominates $g_1$ ($g_0\geq^* g_1$) if there exists $\delta>0$ such that for every $x\in (0,\delta)$, $g_0(x)\geq g_1(x)$.
\end{enumerate}
\end{definition}

For every perfect set, there is a gauge function under which it has positive measure. Since there is a perfect set of $\omega$-generics, there are gauge functions under which the set of $\omega$-generics has positive measure. We give a characterization of such gauge functions.
\\[2mm]
\noindent {\bf Theorem \ref{Cohen_dim}.} $H^f(\mathcal{C}_\Gamma)>0$ if and only if for every gauge functions $g\in\Gamma$, $\hat{f}\not\leq^*g$.
\\[2mm]
Here we state it in a more general form. $\mathcal{C}_\Gamma$ is the set of $\omega$-generics when $\Gamma$ is the set of arithmetic reals. $\hat{f}$ is a slight modification of $f$ that originally appears in the 2006 paper by Olsen and Renfro\cite{olsen2006exact}.

The above theorem shows that the level of generocity determines the dimension.

An $\omega$-generic (viewed as a function $\omega\rightarrow\omega$) is not dominated by every arithmetic function. We can see that the characterization closely resembles the behavior of $\omega$-generics. The second half of the paper looks into Mathias forcing and Sacks forcing. These two forcing generics have opposite behaviors in the sense that Mathias forcing adds fast-growing functions and Sacks forcing adds slow-growing functions. We obtain the following results.
\\[2mm]
\noindent {\bf Theorem \ref{Mathias}.} $H^f(\mathcal{M}_\Gamma)>0$ if and only if for every gauge function $g\in\Gamma$, $f\geq^* g$.
\\[2mm]
\noindent {\bf Theorem \ref{Sacks}.} $H^f(S_\Gamma)>0$ if and only if for every gauge function $g\in\Gamma$, $f\geq^* g$.
\\[2mm]
This tells us despite $\Gamma$-Mathias generics and $\Gamma$-Sacks generics have radically different behaviors, under the gauge function perspective, these two sets of generics are indistinguishable. We pose a natural (and vague) question here.

\begin{question}
Is there a pattern between the behavior of reals and the behavior of gauge functions that makes them measure positive?
\end{question}

We put a few easy facts about gauge functions and perfect trees in the next section that we will reference a few times in our proofs. 

\subsubsection*{Forcing}

A \textit{forcing notion} is a poset $\mathbb{P}=(P,\leq_P)$, where $\leq_P$ is a partial order on $P$. We call every $t\in P$ conditions, and say $t$ \textit{extends} $s$ if $t\leq_P s$. We define the following notion which is a way to produce a dense set.

\begin{definition}
A function $\Theta:P\rightarrow P$ is a density operator if for all $t\in P$, $\Theta(t)\leq t$.
\end{definition}

Clearly $ran(\Theta)$ is a dense set. $\Theta$ serves as the Skolem function for a dense set so that we have a way(of certain complexity) to get from a condition to its extension.

We also need the following definition since our proof works in every complexity that has enough closure properties.

\begin{definition}
    A set $\Gamma$ of reals is a Turing ideal if
    \begin{enumerate}
        \item (downward closed) $\forall x,y\ [(x\in\Gamma\ \wedge\ y\leq_T x)\rightarrow y\in\Gamma],$
        \item (closed under joins) $\forall x,y\in\Gamma (x\oplus y\in\Gamma)$.
    \end{enumerate}
    If $\Gamma$ is a nonempty countable Turing ideal, we call it countable ideal for short.
\end{definition}

The set of recursive reals, the set of arithmetic reals, the set of hyperarithmetic reals, etc, are countable ideals. Every countable ideal contains the set of recursive reals. We fix this symbol $\Gamma$ for a countable ideal throughout the paper.

\subsubsection*{Geometric Measure Theory in Cantor Space}

For every gauge function $f:\mathbb{R}^+\rightarrow\mathbb{R}^+$ and every $A\subseteq 2^\omega$, define the $f$-(outer) measure as
$$H^f(A)=\lim_{n\rightarrow \infty}\inf\left\{\sum_i f(2^{-|s_i|}):\bigcup_i [s_i]\supseteq A\text{ and }\forall i(|s_i|\geq n)\right\}.$$

This is the gauge measure with respect to $f$. For standard properties of $H^f$, see \cite{rogers1998hausdorff}.

For every $x\in\omega^\omega$ that is non-decreasing and $\lim_{n\rightarrow\infty}x(n)=\infty$, we define a gauge function $f_x:\mathbb{R}^+\rightarrow\mathbb{R}^+$ by $x$ as
$$f_x(r)=2^{-x(n)},n \text{ is the least such that }2^{-n}\leq r.$$

It is easy to see that for every gauge function $f$, there is some $x\in \omega^\omega$ such that $f_x\leq f\leq 2f_x$ on a neighborhood of $0$, and we only care if the measure is zero, finite positive or infinite. From now on whenever we refer to a gauge function, we mean a gauge function defined by some $x\in\omega^\omega$, and whenever we talk about complexity of a gauge function, we mean the complexity of $x\in\omega^\omega$ defining it.

\subsubsection*{Notations and Conventions}

We naturally identify $2^\omega$ with $\mathcal{P}(\omega)$. $|A|$ is the cardinality of $A$.

There are three uses of $[*]$ here: For an element $s\in A^{<\omega}$, $[s]=\{x\in A^\omega:x\supset s\}$; for a tree $T\subseteq A^{<\omega}$, $[T]$ is the set of paths through $T$; for every Mathias condition $(s,A)$, $[s,A]$ is the basic open set it generates in Ellentuck topology.

\section{Perfect Trees and Their Measures}

We put some facts about gauge functions and perfect trees here.

\subsection{Perfect Trees}

For a tree $T\subseteq 2^{<\omega}$, we define $height(T)=\sup \{|t|+1:t\in T\}$.
\\[2mm]
For a perfect $T$ there is a unique index function $\sigma_T:2^{<\omega}\rightarrow T$ satisfying
\begin{enumerate}
    \item for all $t\in 2^{<\omega}$, $\sigma_T(t)\ \widehat{}\ 0\subseteq\sigma_T(t\ \widehat{}\ 0), \sigma_T(t)\ \widehat{}\ 1\subseteq\sigma_T(t\ \widehat{}\ 1)$,
    \item for all $t\in T$, there is some $s\in 2^{<\omega}$ such that $\sigma_T(s)\supseteq t$.
\end{enumerate}
And it induces a homeomorphism between $2^\omega$ and $[T]$.

We can consider a specific type of perfect tree where the tree always splits at the same level.

\begin{definition}
Let $T\subseteq 2^{<\omega}$ be a perfect tree and $\sigma_T: 2^{<\omega}\rightarrow T$ be the index function. $T$ is a uniformly perfect tree (a `uniform tree' for short) if
\begin{center}
for every $t_0,t_1\in 2^{<\omega}$ with $|t_0|=|t_1|$, $|\sigma_T(t_0)|=|\sigma_T(t_1)|$.
\end{center}
If $T$ is uniform, we let $\sigma_T(n)=\max \{|\sigma_T(s)|:|s|=n\}$.
\end{definition}

And we need the finite segment of it as well. The following definition can also be made in first-order similar to how we define uniform trees.

\begin{definition}
A tree $T\subseteq 2^{<\omega}$ is partial uniform if there is a uniform tree $S$ and some $n\in\omega$ such that
$$T=S\upharpoonright n.$$
\end{definition}

\begin{figure}[ht]
     \centering
     \begin{subfigure}[b]{0.3\textwidth}
         \centering
\begin{tikzpicture}
\draw (1.5,0) -- (1.5,1);
\draw (1.5,1) -- (0.5,2);
\draw (1.5,1) -- (2.8,2.3);
\draw (0.5,2) -- (0,3);
\draw (0.5,2) -- (0.75,2.5);
\draw (2.8,2.3) -- (2,3.9);
\draw (2.8,2.3) -- (3.8,4.3);
\node[label={[xshift = 0.2cm, yshift= -0.5cm]\scriptsize{$\emptyset$}}] (a) at (1.5,1) {};
\node[label={[xshift = -0.2cm, yshift= -0.5cm]\scriptsize{$0$}}] (b) at (0.5,2) {};
\node[label={[xshift = 0.2cm, yshift= -0.5cm]\scriptsize{$1$}}] (c) at (2.8,2.3) {};
\node[label={[xshift = -0.2cm, yshift= -0.5cm]\scriptsize{$00$}}] (d) at (0,3) {};
\node[label={[xshift = 0.2cm, yshift= -0.5cm]\scriptsize{$01$}}] (e) at (0.75,2.5) {};
\node[label={[xshift = -0.2cm, yshift= -0.5cm]\scriptsize{$10$}}] (f) at (2,3.9) {};
\node[label={[xshift = 0.2cm, yshift= -0.5cm]\scriptsize{$11$}}] (g) at (3.8,4.3) {};
\end{tikzpicture}         
         \caption{Index function}
         \label{indexfcn}
     \end{subfigure}
     \hfill
     \begin{subfigure}[b]{0.2\textwidth}
         \centering
\begin{tikzpicture}
\draw (0,0) -- (0,1);
\foreach \x in {-1,1}
\draw (0,1) -- (\x,2);
\foreach \x in {-1,1}
\foreach \y in {-0.5,0.5}
\draw (\x,2) -- (\x+\y,3);
\foreach \x in {-1,1}
\foreach \y in {-0.5,0.5}
\foreach \z in {-0.25,0.25}
\draw (\x+\y,3) -- (\x+\y+\z, 4);
\filldraw (0,4.5) circle (0.3pt);
\filldraw (0,4.7) circle (0.3pt);
\filldraw (0,4.9) circle (0.3pt);
\end{tikzpicture}
         \caption{Uniform tree}
         \label{Uniformtree}
     \end{subfigure}
\hfill
     \begin{subfigure}[b]{0.2\textwidth}
         \centering
\begin{tikzpicture}
\draw (0,0) -- (0,1);
\foreach \x in {-1,1}
\draw (0,1) -- (\x,2);
\foreach \x in {-1,1}
\foreach \y in {-0.5,0.5}
\draw (\x,2) -- (\x+\y,3);
\end{tikzpicture}
         \caption{Partial uniform}
         \label{paruni}
     \end{subfigure}
\end{figure}

\subsection{The Measure of a Uniform Tree}

We provide an easy way to calculate the measure of uniform trees. The proofs are routine so we omit them.

\begin{proposition}
For every gauge function $f$,
$$H^f(2^\omega)=\liminf_{n\rightarrow\infty}f(2^{-n})\cdot 2^n.$$
\end{proposition}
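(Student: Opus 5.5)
We prove the two inequalities separately. Throughout, $f$ is a gauge function of the form $f_x$, so $f(2^{-n})$ is non-increasing in $n$. Write $L=\liminf_{n\rightarrow\infty}f(2^{-n})\cdot 2^n$.

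\emph{Upper bound.} Fix $n$. Pick $m\geq n$ with $f(2^{-m})2^m$ within $\varepsilon$ of $\inf_{k\geq n}f(2^{-k})2^k$. Covering $2^\omega$ by the $2^m$ cylinders $[s]$ with $|s|=m$ is admissible at stage $n$, and its cost is $f(2^{-m})2^m$. So the infimum at stage $n$ is at most $\inf_{k\geq n}f(2^{-k})2^k$. Letting $n\rightarrow\infty$ gives $H^f(2^\omega)\leq L$.

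\emph{Lower bound.} Fix $n$ and a cover $\{[s_i]\}$ of $2^\omega$ with all $|s_i|\geq n$. We show $\sum_i f(2^{-|s_i|})\geq \inf_{k\geq n}f(2^{-k})2^k$.
\begin{enumerate}
\item By compactness, pass to a finite subcover. We may also discard any $s_i$ that extends another $s_j$, which only lowers the sum. The result is a finite prefix-free set $\{s_i\}$ whose cylinders partition $2^\omega$, so $\sum_i 2^{-|s_i|}=1$.
\item Let $c=\inf_{k\geq n}f(2^{-k})2^k$. For each $i$ we have $f(2^{-|s_i|})=\big(f(2^{-|s_i|})2^{|s_i|}\big)\cdot 2^{-|s_i|}\geq c\cdot 2^{-|s_i|}$.
\item Summing over $i$ gives $\sum_i f(2^{-|s_i|})\geq c\sum_i 2^{-|s_i|}=c$.
\end{enumerate}
Taking the limit as $n\rightarrow\infty$ yields $H^f(2^\omega)\geq\lim_n\inf_{k\geq n}f(2^{-k})2^k=L$.

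The only step needing any care is the reduction in the lower bound. Compactness is what lets us pass from a possibly infinite cover to a finite prefix-free one with Kraft sum exactly $1$. The remaining steps are immediate, and the argument works equally when $L=\infty$.
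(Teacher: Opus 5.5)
Your proof is correct. The paper omits this argument as routine, and yours is the standard one: level-$m$ covers give the upper bound, and for the lower bound the estimate $f(2^{-k})\geq c\,2^{-k}$ for $k\geq n$ is summed against $\sum_i 2^{-|s_i|}\geq 1$, the same comparison the paper uses in its random-real proposition ($H^f(A)\geq\varepsilon\mu(A)$). The compactness reduction is harmless but not needed, since $\sum_i 2^{-|s_i|}\geq 1$ holds for any countable cover by countable subadditivity of Lebesgue measure. If you keep the reduction, also drop duplicate copies of the same string when pruning to a prefix-free set.
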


\begin{proposition}
For every uniform tree $T\subseteq 2^{<\omega}$ and gauge function $f$, define gauge function $g$ as follows
$$g(2^{-n})=f(2^{-\sigma_T(n)}).$$
Then $H^f([T])=H^g(2^\omega)$.
\end{proposition}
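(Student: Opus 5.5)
The plan is to transfer covers back and forth between $[T]$ and $2^\omega$ through the index function $\sigma_T$, which is a homeomorphism $2^\omega\to[T]$. Since $T$ is uniform, a string $s$ of length $n$ corresponds to the node $\sigma_T(s)$ of length $\sigma_T(n)$. Every node $t\in T$ with $\sigma_T(n-1) < |t|\le \sigma_T(n)$ (suitably indexed) determines a unique $s$ with $|s|=n$ and $[t]\cap[T]=[\sigma_T(s)]\cap[T]$, because there is no branching strictly between consecutive splitting levels. The proof therefore reduces to comparing $f$-costs of covers of $[T]$ with $g$-costs of covers of $2^\omega$, where $g(2^{-n})=f(2^{-\sigma_T(n)})$.

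For $H^f([T])\le H^g(2^\omega)$, take a cover $\{[s_i]\}$ of $2^\omega$ with all $|s_i|\ge n$. Its image $\{[\sigma_T(s_i)]\}$ covers $[T]$, since $\sigma_T$ maps $[s_i]$ onto $[\sigma_T(s_i)]\cap[T]$. The lengths satisfy $|\sigma_T(s_i)|=\sigma_T(|s_i|)\ge \sigma_T(n)\to\infty$, and the cost is $\sum_i f(2^{-\sigma_T(|s_i|)})=\sum_i g(2^{-|s_i|})$. Taking infima and letting $n\to\infty$ gives the inequality.

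For $H^f([T])\ge H^g(2^\omega)$, take a cover $\{[t_i]\}$ of $[T]$ with $|t_i|\ge m$. We may discard every $t_i$ with $[t_i]\cap[T]=\emptyset$. Each remaining $t_i$ lies in $T$ and has length in some interval $(\sigma_T(k-1),\sigma_T(k)]$, and in that case it extends uniquely to a node $\sigma_T(s_i)$ with $|s_i|=k$ covering the same part of $[T]$. (The boundary convention, i.e.\ whether nodes of length exactly $\sigma_T(k)$ are assigned to $k$ or $k+1$, is the point to check carefully against the definition of $\sigma_T(n)$ as a length.) Then $\{[s_i]\}$ covers $2^\omega$, and $|s_i|\to\infty$ as $m\to\infty$ because $\sigma_T(k)$ is finite for each $k$. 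The $g$-cost is $\sum_i f(2^{-\sigma_T(|s_i|)})\le\sum_i f(2^{-|t_i|})$, because $|t_i|\le\sigma_T(|s_i|)$ and $f$ is non-decreasing. Taking infima and limits gives $H^g(2^\omega)\le H^f([T])$.

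The main obstacle is the bookkeeping in the second direction. We must confirm that replacing $t_i$ by the longer node $\sigma_T(s_i)$ loses nothing, which holds because uniformity forbids branching between consecutive splitting levels, so the cylinders intersected with $[T]$ coincide. We must also handle the off-by-one indexing of $\sigma_T(n)$. Monotonicity of $f$ is exactly what makes the cost comparison go the right way. The previous proposition is then applicable to $g$, which is a legitimate gauge, being non-decreasing in $2^{-n}$ and tending to $0$.
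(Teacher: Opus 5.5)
Your proof is correct and is essentially the routine argument the paper omits: you transfer covers between $2^\omega$ and $[T]$ through the index function $\sigma_T$, and you use monotonicity of $f$ to push each covering node $t_i$ up to the splitting node $\sigma_T(s_i)$. The boundary convention you flagged resolves the way you chose. A node of length exactly $\sigma_T(k)$ is a splitting node $\sigma_T(s)$ with $|s|=k$, so the right bins are $(\sigma_T(k-1),\sigma_T(k)]$ for $k\ge 1$ and $[0,\sigma_T(0)]$ for $k=0$. The absence of branching inside a bin comes from clause 2 of the definition of the index function; uniformity only makes the bins the same on every branch.
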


\begin{corollary}\label{uniform_measure}
For every uniform tree $T\subseteq 2^{<\omega}$ and gauge function $f$,
$$H^f([T])=\liminf_{n\rightarrow\infty}f(2^{-n})\cdot |\{s\in T:|s|=n\}|.$$
\end{corollary}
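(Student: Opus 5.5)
The corollary follows by chaining the two preceding propositions; the only work is re-indexing. By the proposition on uniform trees, $H^f([T])=H^g(2^\omega)$, where $g(2^{-n})=f(2^{-\sigma_T(n)})$. By the proposition on $H^f(2^\omega)$ applied to $g$,
$$H^f([T])=\liminf_{n\to\infty} g(2^{-n})\cdot 2^n=\liminf_{n\to\infty} f(2^{-\sigma_T(n)})\cdot 2^n .$$
It remains to show that this liminf equals $\liminf_{m\to\infty} f(2^{-m})\cdot N(m)$, where $N(m)=|\{s\in T:|s|=m\}|$.

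I will use the convention that the split nodes of level $n$ have length $\sigma_T(n)$ and split to become nodes of level $n+1$. Set $L_n=\sigma_T(n)$, so that $L_0<L_1<\cdots$. For $L_{n-1}<m\le L_n$ we have $N(m)=2^n$, since each of the $2^n$ nodes of level $n$ has a unique predecessor of length $m$ and these predecessors are distinct. At $m=L_n$ exactly, the term is $f(2^{-L_n})\cdot 2^n$. Hence the sequence $(f(2^{-m})N(m))_m$ contains the sequence $(f(2^{-L_n})2^n)_n$ as a subsequence, and therefore
$$\liminf_m f(2^{-m})N(m)\le \liminf_n f(2^{-L_n})2^n .$$

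For the reverse inequality, take $m$ with $L_{n-1}<m\le L_n$. Since $f$ is non-decreasing and $2^{-m}\ge 2^{-L_n}$, we get $f(2^{-m})N(m)=f(2^{-m})2^n\ge f(2^{-L_n})2^n$. So every term of the full sequence dominates the subsequence term in the same block, and the liminfs are equal. If the convention is instead that nodes of length exactly $L_n$ number $2^{n+1}$, the same block argument works with an index shift.

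The only delicate step is fixing this off-by-one convention in the definition of $\sigma_T(n)$ and $N(m)$. Once it is fixed, monotonicity of $f$ does all the work, so I expect no real obstacle.
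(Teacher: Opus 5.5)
Your proposal is correct and follows the route the paper intends (it omits the proof as routine): chain the two preceding propositions, then compare $\liminf_n f(2^{-\sigma_T(n)})2^n$ with $\liminf_m f(2^{-m})N(m)$ block by block, using $N(m)=2^n$ for $\sigma_T(n-1)<m\le\sigma_T(n)$ and the monotonicity of $f$. The convention you adopt is not a choice but is forced by condition 1 on the index function, which makes every $\sigma_T(t)$ a splitting node, so the closing hedge can be dropped; also note that the upper bound $N(m)\le 2^n$ comes from condition 2, since every node lies below some $\sigma_T(s)$.
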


\subsection{How to Build Gauge Functions of Small Dimensions}

We want to define a gauge function $f_T$ so that the $f_T$ measure of $[T]$ would be large in a uniform way. We define $f_T$ as 
$$f_T(2^{-n})=2^{-m},\text{ where }m\in\omega\text{ is the least such that }\max\{|\sigma_T(t)|:|t|=m\}\geq n.$$

\begin{figure}[H]
    \centering
\begin{tikzpicture}
\draw (1.5,0) -- (1.5,1);
\draw (1.5,1) -- (0.5,2);
\draw (1.5,1) -- (2.8,2.3);
\draw (0.5,2) -- (0,3);
\draw (0.5,2) -- (0.75,2.5);
\draw (2.8,2.3) -- (2,3.9);
\draw (2.8,2.3) -- (3.8,4.3);
\node[label={[xshift = 0.2cm, yshift= -0.5cm]\scriptsize{$\emptyset$}}] (a) at (1.5,1) {};
\node[label={[xshift = -0.2cm, yshift= -0.5cm]\scriptsize{$0$}}] (b) at (0.5,2) {};
\node[label={[xshift = 0.2cm, yshift= -0.5cm]\scriptsize{$1$}}] (c) at (2.8,2.3) {};
\node[label={[xshift = -0.2cm, yshift= -0.5cm]\scriptsize{$00$}}] (d) at (0,3) {};
\node[label={[xshift = 0.2cm, yshift= -0.5cm]\scriptsize{$01$}}] (e) at (0.75,2.5) {};
\node[label={[xshift = -0.2cm, yshift= -0.5cm]\scriptsize{$10$}}] (f) at (2,3.9) {};
\node[label={[xshift = 0.2cm, yshift= -0.5cm]\scriptsize{$11$}}] (g) at (3.8,4.3) {};
\draw [dashed] (-0.9,1) -- (3.9,1);
\draw [dashed] (-0.9,2.3) -- (3.9,2.3);
\draw [dashed] (-0.9,4.3) -- (3.9,4.3);
\draw[<->] (-0.8,0.1) -- (-0.8,0.9);
\draw[<->] (-0.8,1.1) -- (-0.8,2.2);
\draw[<->] (-0.8,2.4) -- (-0.8,4.2);
\node[label={[xshift = -0.3cm, yshift= -0.3cm]\scriptsize{$1$}}] (a) at (-0.8,0.5) {};
\node[label={[xshift = -0.3cm, yshift= -0.3cm]\scriptsize{$1/2$}}] (a) at (-0.8,1.65) {};
\node[label={[xshift = -0.3cm, yshift= -0.3cm]\scriptsize{$1/4$}}] (a) at (-0.8,3.3) {};
\end{tikzpicture}
\end{figure}

\begin{proposition}\label{smalldim_lemma}
\begin{enumerate}
    \item $f_T$ is a gauge function.
    \item If $T,S\subseteq 2^{<\omega}$ are perfect trees and $n\in\omega$ such that
    $$\forall t\in 2^{<\omega}(|t|\leq n\rightarrow (t\in T\leftrightarrow t\in S)),$$
    then for all $k\leq n$, $f_T(2^{-k})=f_S(2^{-k})$.
    \item For every open cover $\bigcup_i [t_i]\supseteq [T]$,
    $$\sum_{i\in\omega} f_T(2^{-|t_i|})\geq 1.$$
\end{enumerate}
\end{proposition}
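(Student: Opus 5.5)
Write $\ell(m)=\max\{|\sigma_T(t)|:|t|=m\}$, so that $f_T(2^{-n})=2^{-m(n)}$ with $m(n)$ the least $m$ such that $\ell(m)\geq n$. The three parts are handled separately, and essentially everything reduces to properties of $\ell$.

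\emph{Part (1).} I will first show that $\ell$ is strictly increasing and tends to infinity. Indeed, $\sigma_T(t^\frown i)\supsetneq\sigma_T(t)$, so every string at level $m+1$ of the index function is strictly longer than its parent at level $m$. Consequently $m(n)$ exists for every $n$, is non-decreasing in $n$, and is unbounded (for each $M$, $m(n)>M$ once $n>\ell(M)$). Hence $f_T(2^{-n})$ is non-increasing in $n$ and tends to $0$. Extending $f_T$ to $\mathbb{R}^+$ as a step function in the same way the paper does for $f_x$ gives a non-decreasing, right-continuous function with limit $0$ at $0^+$.

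\emph{Part (2).} The point is that the value $f_T(2^{-k})$ for $k\leq n$ depends only on $T\cap 2^{\leq n}$. I will argue that the restriction of $\sigma_T$ to those $t$ with $|\sigma_T(t)|\leq n$ is determined by $T\upharpoonright(n+1)$. The reason is that $\sigma_T(t)$ is obtained by following $\sigma_T(t^-)^\frown i$ upward until the first splitting node, and whether a node of length $<n$ splits is visible in $T\cap 2^{\leq n}$. Next, deciding whether $\ell(m)\geq k$ for $k\leq n$ only requires knowing, for each $t$ with $|t|=m$, whether $|\sigma_T(t)|\geq k$. This in turn is determined by tracing splittings up to length $k\leq n$: if no splitting has occurred by length $<k$, then $\sigma_T(t)$ has length $\geq k$. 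So $m(k)$ agrees for $T$ and $S$. The one care point is the boundary case $|\sigma_T(t)|=n$, which requires knowing that $\sigma_T(t)$ splits at length $n$, i.e. seeing level $n+1$. I will handle this by phrasing the test as ``there is an $m$-level node whose $\sigma$-value has length $\geq k$'', which amounts to the existence of a chain of exactly $m$ splittings strictly below length $k$. That statement only involves splitting at nodes of length $<k\leq n$, so it only needs levels $\leq n$.

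\emph{Part (3).} This is the main step. By compactness of $[T]$ I reduce to a finite cover $[t_1],\dots,[t_r]$, and I may assume each $t_i\in T$ (dropping those $t_i$ with $[t_i]\cap[T]=\emptyset$ and noting the sum only decreases). For each $t_i$, let $m_i=m(|t_i|)$, so that $\ell(m_i)\geq|t_i|$. The key claim is $\mu_T([t_i]\cap[T])\leq 2^{-m_i}$, where $\mu_T$ is the pushforward of the uniform measure on $2^\omega$ under the homeomorphism induced by $\sigma_T$. To prove it, I will show that all paths of $[T]$ through $t_i$ share a common index prefix $s$ with $|s|\geq m_i$. Since $m_i$ is minimal, $\ell(m_i-1)<|t_i|$, so every node $\sigma_T(u)$ with $|u|=m_i-1$ is a proper initial segment of strings of length $|t_i|$. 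Hence $t_i$ extends some $\sigma_T(u)^\frown j$ with $|u|=m_i-1$, and then $[t_i]\cap[T]\subseteq\sigma_T$-image of $[u^\frown j]$, which has measure $2^{-m_i}$. (If $m_i=0$ the bound is trivially $1$.) Summing, $1=\mu_T([T])\leq\sum_i\mu_T([t_i]\cap[T])\leq\sum_i 2^{-m_i}=\sum_i f_T(2^{-|t_i|})$. The delicate point I expect to verify carefully is exactly this prefix argument, in particular that $t_i$ indeed passes beyond $\sigma_T(u)$ rather than stopping at or before it.
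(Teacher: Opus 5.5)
The paper states this proposition without proof, so there is no argument of the author's to compare yours against. Your proposal is correct and fills that gap, with two small fixes noted below.

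\emph{Part (1)} is fine as written.

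\emph{Part (2).} The idea is right, but ``a chain of exactly $m$ splittings'' should read ``at most $m$''. For instance $\ell(5)\geq 1$, yet no node of length $1$ has five splitting predecessors. Only the least such $m$ matters, so your conclusion survives. A cleaner formulation is $m(k)=\min\{|\{w\subsetneq v: w^\frown 0,w^\frown 1\in T\}| : v\in T,\ |v|=k\}$. This visibly depends only on strings of length $\leq k\leq n$. It uses the fact that the index function enumerates every splitting node, which conditions 1 and 2 of its definition force.

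\emph{Part (3).} You are running the mass distribution principle for the pushforward $\mu_T$ of the uniform measure, and it works. The ``delicate point'' you flag is immediate. The string $t_i$ lies on a path of $[T]$, and that path passes through some $\sigma_T(u)$ with $|u|=m_i-1$. Since $|\sigma_T(u)|\leq\ell(m_i-1)<|t_i|$, you get $t_i\supseteq\sigma_T(u)^\frown j$ for the next index bit $j$.

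The containment of $[t_i]\cap[T]$ in the image of $[u^\frown j]$ then follows from condition 1 of the index function. Suppose the index of a path through $\sigma_T(u)^\frown j$ first differed from $u^\frown j$ at some position $r<m_i$. Then the path would extend both $w^\frown 0$ and $w^\frown 1$, where $w$ is $\sigma_T$ of the common prefix, which is impossible.

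Finally, the compactness reduction is harmless but unnecessary, since $\mu_T$ is countably subadditive.
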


\section{Cohen Forcing}

\subsubsection*{The Dimension of the Set of \texorpdfstring{$\Gamma$}{Gamma}-Cohen Generics}

Let $\mathbb{P}_\mathcal{C}=(2^{<\omega},\supseteq)$ be the poset for Cohen forcing. We can view subsets of $2^{<\omega}$ as subsets of $\omega$.
\begin{definition}
Given a countable ideal $\Gamma$, $x\in 2^\omega$ is $\Gamma$-Cohen generic if it meets every dense set $D\in \Gamma$.
\end{definition}

Take $\Gamma$ be the set of arithmetic reals, $\Gamma$-Cohen generics are exactly $\omega$-generics in recursion theory.

\begin{lemma}\label{arith_Cohen}
Consider the set $\mathcal{C}_\Gamma$ of $\Gamma$-Cohen generics, for every gauge function $g$ such that $g(x)/x$ is monotonically decreasing, the following are equivalent:
\begin{enumerate}
    \item $H^g(\mathcal{C}_\Gamma)>0$,
    \item for every gauge functions $f\in \Gamma$, $f\not\geq^* g$.
\end{enumerate}
We do not need condition `$g(x)/x$ monotonically decreasing' to prove the implication 1 $\rightarrow$ 2.
\end{lemma}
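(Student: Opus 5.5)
We prove the two implications separately. The implication $1\rightarrow 2$ is a covering argument that uses no monotonicity. The implication $2\rightarrow 1$ builds a perfect tree of generics and bounds its $H^g$-measure from below by a mass distribution argument. That second half is where the hypothesis that $g(x)/x$ is monotonically decreasing is used.

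\emph{$1\rightarrow 2$ (contrapositive).} Suppose $f\in\Gamma$ is a gauge function with $f\geq^* g$, say $g(2^{-n})\le f(2^{-n})$ for all $n\ge n_0$. Fix $k$ and $N\ge n_0$, and enumerate $2^{<\omega}$ as $s_0,s_1,\dots$. Computably in $f$, choose for each $i$ an extension $t_i\supseteq s_i$ with $|t_i|\ge N$ and $f(2^{-|t_i|})\le 2^{-i-k}$; such an extension exists because $f(2^{-n})\to 0$. The set $D=\{t\in 2^{<\omega}:\exists i\,(t\supseteq t_i)\}$ is dense and lies in $\Gamma$. Hence every $\Gamma$-Cohen generic meets $D$ and so lies in $\bigcup_i[t_i]$. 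This gives a cover of $\mathcal{C}_\Gamma$ by strings of length at least $N$ with
$$\sum_i g(2^{-|t_i|})\le\sum_i f(2^{-|t_i|})\le 2^{-k+1}.$$
Letting $k\to\infty$ for each $N$ gives $H^g(\mathcal{C}_\Gamma)=0$.

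\emph{$2\rightarrow 1$.} Enumerate the dense sets of $\Gamma$ as $D_0,D_1,\dots$. We build a perfect tree $T$ with $[T]\subseteq\mathcal{C}_\Gamma$ in stages. After stage $k$ we will have a finite partial uniform tree with $2^{m_k}$ leaves, all of the same length $\ell_k$. At stage $k$ we first extend every leaf, without branching, to a string in $D_k$, padding so that all leaves get a common length $\ell_k'$. These finitely many strings are fixed data. We then define
$$f_k(2^{-n})=2^{-m_k-(n-\ell'_k)}\quad\text{for }n\ge\ell'_k,$$
with any suitable values for $n<\ell'_k$. Since $f_k$ is computable outright, $f_k\in\Gamma$. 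By hypothesis $2$ there is $n_k>\ell'_k$ with $g(2^{-n_k})>f_k(2^{-n_k})$. We then branch fully at every level from $\ell'_k$ to $n_k$, so that $m_{k+1}=m_k+(n_k-\ell'_k)$ and $\ell_{k+1}=n_k$. Every path of $T$ meets every $D_k$, so $[T]\subseteq\mathcal{C}_\Gamma$.

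\emph{Measure estimate, which we expect to be the main step.} Let $\mu$ be the uniform probability measure on $[T]$. We aim to show
$$\mu([s])\le g(2^{-|s|})\quad\text{for every }s\in T \text{ with } |s|\ge\ell_0.$$
The mass distribution principle then gives $H^g([T])\ge\mu([T])=1>0$: any cover by $[s_i]$ has $\sum_i g(2^{-|s_i|})\ge\sum_i\mu([s_i])\ge 1$, after trimming each $s_i$ to a node of $T$ or discarding it.

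To check the bound, use that $g(x)/x$ is decreasing, so $g(2^{-j})\ge 2^{-(n_k-j)}g(2^{-n_k})$ for $j\le n_k$.

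In the branching block $\ell'_k\le j\le n_k$, a node at level $j$ has mass $2^{-m_k-(j-\ell'_k)}$. The displayed inequality together with the choice of $n_k$ gives
$$g(2^{-j})\ge 2^{-(n_k-j)}\,2^{-m_k-(n_k-\ell'_k)}\cdot 2^{\,n_k-\ell'_k}\cdot 2^{-(n_k-\ell'_k)}\ge 2^{-m_k-(j-\ell'_k)},$$
which is the required bound. In particular $g(2^{-\ell'_k})\ge 2^{-m_k}$.

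In the non-branching stretch $\ell_k\le j\le\ell'_k$, the mass is constantly $2^{-m_k}$. There the bound follows because $g$ is nondecreasing in $x$, so $g(2^{-j})\ge g(2^{-\ell'_k})\ge 2^{-m_k}$.

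The delicate point is exactly this bookkeeping: the gauge $f_k$ must be chosen so that a single witness $n_k$ at which $g$ exceeds $f_k$ controls $g$ over the whole preceding padding stretch and the whole branching block. The monotonicity of $g(x)/x$ is precisely what propagates the bound at $n_k$ backward across the block.
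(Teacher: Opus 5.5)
Your $1\rightarrow 2$ argument is fine and matches the paper's. The $2\rightarrow 1$ measure estimate, however, fails, because you use the hypothesis on $g(x)/x$ in the wrong direction. Since $g(x)/x$ decreases as $x$ increases, comparing a level $j\le n_k$ with level $n_k$ gives only the \emph{upper} bound $g(2^{-j})\le 2^{\,n_k-j}g(2^{-n_k})$. The lower bound you display holds merely because $g$ is nondecreasing, and it is far too weak. Your chain has middle term $2^{-m_k-(n_k-\ell'_k)-(n_k-j)}$, and this is $\ge 2^{-m_k-(j-\ell'_k)}$ only when $j\ge n_k$. The monotonicity of $g(x)/x$ pushes a lower bound on $g(x)/x$ from one level to deeper levels, never back up. So a witness at the bottom of the branching block says nothing about the levels above it. Consequently ``$g(2^{-\ell'_k})\ge 2^{-m_k}$'' is unjustified. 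So is the bound on the padding stretch $\ell_k\le j\le\ell'_k$, where there are only $2^{m_k}$ nodes per level and $\ell'_k$ is dictated by $D_k$ after $n_{k-1}$ has been fixed.

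Better bookkeeping cannot repair this, because your construction extracts too little from hypothesis $2$. Near $0$ each $f_k$ is just $2^{\ell'_k-m_k}x$, so all you ever use is $g(x)/x\to\infty$. That also holds for a computable gauge such as $g(2^{-n})=2^{-(n-\lfloor\log_2(n+1)\rfloor)}$, whose $g(x)/x$ is monotonically decreasing as required. For this $g$, your own $1\rightarrow 2$ argument with $f=g\in\Gamma$ gives $H^g(\mathcal{C}_\Gamma)=0$, yet your construction runs verbatim for it, so no valid positivity estimate can follow from it.

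The missing idea, used in the paper's Claim~\ref{one-step}, is to branch first and pad afterwards, with the auxiliary gauge encoding the cost of meeting the dense set. Suppose there are $K$ leaves at level $l$ with $Kg(2^{-l})\ge 1$. Let $k_n$ be the common length needed to extend into $D_k$ all $K2^n$ nodes obtained by $n$ further full splittings; this is computable from $D_k$, hence in $\Gamma$. Set $f(2^{-c})=K^{-1}2^{-n}$ for $k_n\le c<k_{n+1}$. A witness $c$ with $g(2^{-c})>f(2^{-c})$ now lies at the \emph{end} of a padding stretch. On the padding levels $l+n\le j\le k_n$, monotonicity of $g$ alone gives $g(2^{-j})\ge g(2^{-k_n})>K^{-1}2^{-n}$. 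On the branching levels $l\le j\le l+n$, the inductive bound is carried \emph{forward} by the hypothesis on $g(x)/x$: $g(2^{-j})\ge 2^{-(j-l)}g(2^{-l})\ge K^{-1}2^{-(j-l)}$.
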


\begin{proof}
If there is some $f\in\Gamma$ such that $g\leq^* f$, to show $H^g(\mathcal{C}_\Gamma)=0$, we only need to show $H^f(\mathcal{C}_\Gamma)=0$. Enumerate $2^{<\omega}$ computably as $\{\sigma_n:n\in\omega\}$. For every $l\in\omega$, we can define $O_l=\{\sigma_{k_n}:n\in\omega\}$ as

$k_n=$ the smallest $k$ satisfying $|\sigma_{k}|\geq l$, $\sigma_{k}\supseteq \sigma_n$ and $f(2^{-|\sigma_{k}|})\leq 2^{-(l+n+1)}$.\\
Then
\begin{enumerate}
    \item $O_l$ is computable and dense,
    \item $\sum_n f(2^{-|\sigma_{k_n}|})\leq 2^{-l}$.
\end{enumerate}
Since $O_l$ is computable, $O_l\in\Gamma$. This witnesses $H^f(C_\Gamma)=0$.\\[4mm]
For the other direction, without loss of generality let $g(1)=1$. 
\begin{observation}
If $H^g(\mathcal{C}_\Gamma)>0$, since $\mathcal{C}_\Gamma$ is $G_\delta$, there is a closed set $C\subseteq \mathcal{C}_\Gamma$ such that $H^g(C)>0$(\cite{BesicovitchA.S.1952Oeos,DaviesR.O.1952Sofm}).
By perfect set property, there is a perfect tree $T$ such that $[T]\subseteq C$ and $C\setminus [T]$ is countable, so $H^g([T])>0$. We will build such a perfect tree.
\end{observation}

We identify a subset of $2^{<\omega}$ with the open set it generates. Fix an enumeration $\{O_n\}_{n>0}$ of all elements of $\Gamma$ that generate dense open sets. If for every gauge function in $f\in\Gamma$, $g\not\leq^* f$, we will define a uniform tree $T=\bigcup T_n$ with the following properties:
\begin{enumerate}
    \item There exists an increasing sequence $\{l_n\}\subseteq\omega$ such that $T_n=T\upharpoonright l_n$.
    \item $g(2^{-n})\cdot |\{s\in T:|s|=n\}|\geq 1$ for every $n\in\omega$.
    \item For every end-node $s\in T_n$, $s\in O_n$.
\end{enumerate}
Then $[T]\subseteq C_\Gamma$ and by Corollary \ref{uniform_measure} we have $H^f([T])\geq 1$. We will define $T_n$ by recursion and the following claim establishes one step in the process.

\begin{claim}\label{one-step}
For every partial uniform tree $S\subseteq 2^{<\omega}$ satisfying for every $n\leq height(S)$,
$$g(2^{-n})\cdot |\{s\in S:|s|=n\}|\geq 1,$$
and every dense open set $O\in\Gamma$, there is a partial uniform tree $S^*\supset S$ such that
\begin{enumerate}
    \item Let $l=height(S)$, $S^*\upharpoonright l=S$.
    \item $g(2^{-n})\cdot |\{s\in S^*:|s|=n\}|\geq 1$, for all $n\leq height(S^*)$,
    \item for every end-node $s\in S^*$, $s\in O$.
\end{enumerate}
\end{claim}

\begin{proof}[Proof of the Claim]
Let $K=|\{s\in S: |s|=l\}|$. Enumerate $2^{<\omega}$ computably as $\{\sigma_n:n\in\omega\}$, define $\Theta(t)=\sigma_n$ where $n$ is the least such that
\begin{enumerate}
    \item $\sigma_n\supseteq t$,
    \item $\sigma_n\in O$.
\end{enumerate}
Let $k_{-1}=l$, for every $n\in\omega$, define
$$P_n=\{t:t\supseteq s\text{ where }s\text{ is an end-node of }S\text{ and }|t|=|s|+n\},$$
$$k_n=\max\{k_{n-1}+1,|\Theta(t)|:t\in P_n\}.$$
Define a gauge function $f$ as
$$f(2^{-c})=K^{-1}\cdot 2^{-n},\text{ where }n\text{ is the largest such that }k_n\leq c.$$
Since $f\in\Gamma$, there is some $c>l$ such that $g(2^{-c})\geq f(2^{-c})$. Let $n$ be the largest such that $k_n\leq c$, then $g(2^{-k_n})\geq K^{-1}\cdot 2^{-n}$. Let
$$S^*=\{\sigma\in 2^{<\omega}:\exists t\in P_n(\sigma,\Theta(t)\text{ comparable and }|\sigma|\leq k_n)\}.$$
We know
$$g(2^{-k_n})\cdot |\{s\in S^*:|s|=k_n\}|\geq 1.$$
For the assumption $g(x)/x$ decreasing and $g(2^{-l})\cdot |\{s\in S^*:|s|=l\}|\geq 1$ we know
$$g(2^{-{l+m}})\cdot |\{s\in S^*:|s|=l+m\}|\geq 1,\ \forall m\leq n.$$
There is no splitting on $S^*$ from level $l+n$ to level $k_n$, so
$$g(2^{-n})\cdot |\{s\in S^*:|s|=n\}|\geq 1,\ \forall n\leq height(S^*).$$
\end{proof}

Let $T_0=\emptyset$, given $T_n$, let $S=T_n$, $O=O_n$ and define $T_{n+1}=S^*$ from the claim. Note that $T=\bigcup T_n$ is a perfect tree since $\lim_{x\rightarrow 0+}g(x)=0$. This completes the proof.
\end{proof}

\subsubsection*{Lifting the Restriction}

The following proposition is Lemma 2.2 in \cite{olsen2006exact}. We sketch part of the proof since we are working in $2^\omega$ now.

\begin{proposition}\label{2006}(\cite{olsen2006exact})
If $g:\{2^{-n}:n\in\omega\}\rightarrow \{2^{-n}:n\in\omega\}$ is a gauge function, define function $\hat{g}:\{2^{-n}:n\in\omega\}\rightarrow \{2^{-n}:n\in\omega\}$ as
$$\hat{g}(x)=\inf_{0<s\leq x}x\frac{g(s)}{s}$$
\begin{enumerate}
    \item $\hat{g}$ is a gauge function.
    \item The function 
    $$x\mapsto \frac{\hat{g}(x)}{x}$$
    is decreasing in a neighborhood of $0$.
    \item \label{L3} We have
    $$H^{\hat{g}}(E) = H^g(E)$$
    for every $E\subseteq 2^\omega$.
\end{enumerate}
\end{proposition}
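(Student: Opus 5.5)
Here $\hat g(x)=\inf_{0<s\le x} x\,g(s)/s$. I will treat items 1 and 2 as routine consequences of this formula and spend most of the effort on item 3. The plan is to adapt the argument of Olsen and Renfro to the dyadic setting, where we only evaluate at $x=2^{-n}$.

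For item 1, taking $s=x$ in the infimum gives $\hat g(x)\le g(x)$, so $\hat g(x)\to 0$ as $x\to 0^+$. Monotonicity holds because for $x'\le x$,
$$\hat g(x')=x'\inf_{s\le x'} \frac{g(s)}{s}\le x'\inf_{s\le x}\frac{g(s)}{s}\cdot \frac{x}{x'}\cdot\frac{x'}{x},$$
and a cleaner way to see it is to write $\hat g(x)=x\cdot h(x)$ with $h(x)=\inf_{s\le x}g(s)/s$ nonincreasing in $x$. Then $\hat g(x')/x'=h(x')\ge h(x)=\hat g(x)/x$, which gives item 2 directly.

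Monotonicity of $\hat g$ itself needs one more observation. For $s\le x'$ we have $x'g(s)/s\le x\,g(s)/s$. For $s\in(x',x]$ we have $x'g(s)/s\le x'g(s)/x'=g(s)$, and $g(s)\ge g(x')\ge\hat g(x')$. Together these give $\hat g(x')\le \hat g(x)$. In the dyadic setting the infimum is over $s=2^{-m}$ with $m\ge n$. We may also round $\hat g$ down to a power of $2$, which changes it by at most a factor $2$ and affects nothing we care about. If $\hat g(x)=0$ occurs, that case has to be handled separately, noting $H^g=0$ there as well.

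For item 3, since $\hat g\le g$ we immediately get $H^{\hat g}\le H^g$. For the reverse inequality, fix a cover $\{[t_i]\}$ of $E$ with all $|t_i|\ge N$. Write $x_i=2^{-|t_i|}$ and choose $s_i=2^{-m_i}\le x_i$ nearly attaining the infimum, so that
$$x_i\frac{g(s_i)}{s_i}\le 2\,\hat g(x_i).$$
Now replace $[t_i]$ by the $2^{m_i-|t_i|}=x_i/s_i$ extensions of $t_i$ of length $m_i$. These still cover $[t_i]$, have lengths $\ge N$, and their total $g$-cost is $(x_i/s_i)\,g(s_i)\le 2\hat g(x_i)$. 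Summing over $i$ and taking the infimum and the limit gives $H^g(E)\le 2H^{\hat g}(E)$.

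This argument only gives equality up to a constant, but the paper only cares whether the measure is zero, finite positive, or infinite. Exact equality would follow by taking the near-optimality factor $1+\varepsilon$ instead of $2$, provided we do not round. The infimum may not be attained, since it ranges over infinitely many $m$, but near-attainment is enough.

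The main obstacle is exactly this bookkeeping of the near-optimal $s_i$ and the dyadic rounding. Conceptually, the point is that subdividing a cylinder into uniform subcylinders realizes the cost $x\,g(s)/s$, which is all item 3 requires.
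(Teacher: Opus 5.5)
Your argument for item 3 is the same as the paper's: pick $s_i\le x_i$ nearly attaining the infimum and split $[t_i]$ into its $x_i/s_i$ subcylinders of length $m_i$, so that the $g$-cost becomes $x_i g(s_i)/s_i\approx\hat g(x_i)$. The only difference is that the paper uses additive slack $\delta/2^{i+1}$, which also covers the degenerate case $\hat g\equiv 0$; your multiplicative $1+\varepsilon$ slack forces you to treat that case separately, which you do correctly, since there $\liminf_n g(2^{-n})2^n=0$ gives $H^g\equiv 0$.

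Items 1 and 2, which the paper omits, are fine apart from a slip in the monotonicity step: for $s\in(x',x]$ the inequality you need is $x\,g(s)/s\ge g(s)\ge g(x')\ge\hat g(x')$, not $x'g(s)/s\le g(s)$.
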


\begin{proof}
We only show the $H^g(E)\leq H^{\hat{g}}(E)$ direction in \ref{L3}. Take every $\delta>0$, we will show $H^{\hat{g}}_\delta(E)+\delta\geq H^g_\delta(E)$. Take every $\delta$ cover of $E$ as $([\sigma_i])_i$, let $r_i=2^{-|\sigma_i|}$. Since $\hat{g}(r_i)+\frac{\delta}{2^{i+1}}>\hat{g}(r_i)$, there is some $0<s_i\leq r_i$ such that $r_i\frac{g(s_i)}{s_i}\leq \hat{g}(r_i)+\frac{\delta}{2^{i+1}}$. Note that both $s_i,r_i\in \mathbb{D}$, and $[\sigma_i]$ can be divided into $\{[\sigma_{ij}]:j\leq \frac{r_i}{s_i}\}$. Then we have
$$\sum_i \hat{g}(r_i)+\delta\geq \sum_i r_i\frac{g(s_i)}{s_i}=\sum_i\sum_j g(2^{-[\sigma_{ij}]})\geq H^g_\delta(E).$$
So $H^{\hat{g}}_\delta(E)+\delta\geq H^g_\delta(E)$. Let $\delta\rightarrow 0+$, we get the desired result.
\end{proof}

\begin{remark}
$g(x)/x$ is decreasing if and only if $g=\hat{g}$.
\end{remark}

\begin{theorem}
\label{Cohen_dim}
$H^f(\mathcal{C}_\Gamma)>0$ if and only if for every gauge functions $g\in\Gamma$, $g\not\geq^*\hat{f}$.
\end{theorem}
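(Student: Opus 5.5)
The plan is to reduce Theorem \ref{Cohen_dim} to Lemma \ref{arith_Cohen} by replacing $f$ with $\hat{f}$ via Proposition \ref{2006}. By part \ref{L3} of that proposition, $H^f(E)=H^{\hat f}(E)$ for every $E\subseteq 2^\omega$. In particular
$$H^f(\mathcal{C}_\Gamma)>0 \iff H^{\hat f}(\mathcal{C}_\Gamma)>0.$$
So it suffices to show that $H^{\hat f}(\mathcal{C}_\Gamma)>0$ holds if and only if no gauge function $g\in\Gamma$ satisfies $g\geq^*\hat f$. This is exactly Lemma \ref{arith_Cohen}, applied with the lemma's gauge function taken to be $\hat f$ and its quantified functions ranging over gauge functions in $\Gamma$. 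To apply it, I must check that $\hat f$ is a gauge function and that $\hat f(x)/x$ is monotonically decreasing. Parts 1 and 2 of Proposition \ref{2006} give both of these.

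The one point needing care is that part 2 only gives monotonicity of $\hat f(x)/x$ in a neighborhood of $0$, while Lemma \ref{arith_Cohen} is stated with global monotonicity. I will handle this in one of two ways. The first is to observe that the proof of Lemma \ref{arith_Cohen} uses monotonicity only at levels $n\geq l$ for sufficiently large $l$. We may start the tree construction at a level beyond the neighborhood, taking $T_0$ to be a full binary tree up to some height $l_0$ where the counting condition holds, or first shrinking by a finite trunk. The second is to modify $\hat f$ on finitely many values $\{2^{-n}:n<N\}$ to a gauge $\tilde f$ with $\tilde f(x)/x$ globally decreasing. Such a finite modification changes neither $H^{\tilde f}$ nor the relation $g\geq^*\tilde f$, since both depend only on behaviour near $0$. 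For example, set $\tilde f(2^{-n})=2^{N-n}\hat f(2^{-N})$ for $n<N$. This is non-decreasing in $x$ and keeps $\tilde f(x)/x$ constant on that initial range. I would take the second route as cleaner.

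I also need that the implication $1\rightarrow 2$ is already stated not to need the monotonicity assumption, so only the direction $2\rightarrow 1$ uses the modified gauge. Finally, I must check that "gauge function in $\Gamma$" is unaffected: $\hat f$ itself need not be in $\Gamma$, but the lemma never requires that. The main (mild) obstacle is thus just the local-versus-global monotonicity, which the finite modification resolves.
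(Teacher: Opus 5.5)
Your proposal is correct and matches the paper's argument: the paper proves the theorem in one line, by Lemma \ref{arith_Cohen} applied to $\hat f$ together with Proposition \ref{2006} (so $H^f=H^{\hat f}$). Your finite modification to get global monotonicity is sound but unnecessary, because $\hat f(x)/x=\inf_{0<s\le x} f(s)/s$ is already non-increasing in $x$ on its whole domain: shrinking the range of an infimum can only raise it.
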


\begin{proof}
By Lemma \ref{arith_Cohen} and Proposition \ref{2006}.
\end{proof}

\begin{remark}
It is not true if we replace $\hat{g}$ by $g$ in the above corollary. We can build some gauge function $g$ that for every $f\in\Gamma$, $g\not\leq^* f$ but $\hat{g}(x)\leq x$.
\end{remark}

\subsubsection*{Some Observations}

One natural question to ask is the boundary behavior: what value can $H^f(\mathcal{C}_\Gamma)$ be if it is positive. It turns out it can not be $\sigma$-finite. This is basically the same proof idea from \cite{olsen2006exact}.

\begin{proposition}\label{no_strong_dim}
Let $\Upsilon$ be a set of gauge functions such that
\begin{enumerate}
    \item for every $h\in\Upsilon$ and every $n\in\omega$, there exists $k\in\Upsilon$ such that $n\cdot h\leq^* k$.
    \item $\text{id}\in \Upsilon$.
\end{enumerate}
If $g$ is a gauge function such that
\begin{enumerate}
    \item for every $h\in\Upsilon$, $g\not\leq^* h$,
    \item $g(x)/x$ is monotonically decreasing,
\end{enumerate}
then there is a gauge function $f$ such that
\begin{enumerate}
    \item $f\succ g$,
    \item for every $h\in\Upsilon$, $f\not\leq^* h$,
    \item $f(x)/x$ is monotonically decreasing.
\end{enumerate}
\end{proposition}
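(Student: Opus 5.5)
The plan is to construct $f$ directly as $f(x)=x\cdot\min\{g(x)/x,\,L(x)\}$ for a slowly growing step function $L$, chosen by a diagonal argument against $\Upsilon$. I read $f\succ g$ as $\lim_{x\to0+}f(x)/g(x)=0$; this is what is needed to show that $H^g(\mathcal{C}_\Gamma)$ cannot be $\sigma$-finite. I assume $\Upsilon$ is countable, as it is in the intended application where $\Upsilon$ is the set of gauge functions in $\Gamma$. Enumerate $\Upsilon=\{h_i\}_{i\in\omega}$ and fix $\mathrm{id}=h_0$.

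\emph{Step 1: $g(x)/x\to\infty$.} Fix $n$. By hypothesis 1 on $\Upsilon$ there is $k\in\Upsilon$ with $n\cdot\mathrm{id}\leq^* k$. Since $g\not\leq^* k$, we get $g(x)>nx$ for arbitrarily small $x$. Because $g(x)/x$ is monotone decreasing in $x$, this forces $g(x)/x>n$ for all sufficiently small $x$.

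\emph{Step 2: the witnesses $x_j$.} Again by hypothesis 1 and induction, for each $j$ pick $k_j\in\Upsilon$ with $k_j\geq^* j\cdot\max_{i\leq j}h_i$. Since $g\not\leq^* k_j$, choose $x_j\in\{2^{-n}\}$ inductively with the following properties:
\begin{itemize}
\item $x_j\to0$;
\item $g(x_j)>k_j(x_j)\geq j\,h_i(x_j)$ for all $i\leq j$;
\item $x_{j+1}$ is so small that $g(x_{j+1})/x_{j+1}\geq (j+1)\,g(x_j)/x_j$.
\end{itemize}
The last property is possible by Step 1. Now define the step function $L(x)=\frac{1}{j}\,g(x_j)/x_j$ for $x\in(x_{j+1},x_j]$. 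By the third property, $L$ is decreasing as a function of $x$, and $L\to\infty$ as $x\to0+$.

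\emph{Step 3: verification.} Set $f_0(x)=\min\{g(x),\,xL(x)\}$.
\begin{itemize}
\item $f_0(x)/x=\min\{g(x)/x,L(x)\}$ is a minimum of two decreasing functions, so it is decreasing.
\item At the witnesses, $f_0(x_j)=\min\{g(x_j),\,g(x_j)/j\}=g(x_j)/j>h_i(x_j)$ for all $i\leq j$. Hence $f_0\not\leq^* h_i$ for every $i$.
\item For $x\in(x_{j+1},x_j]$ we have $g(x)/x\geq g(x_j)/x_j$, so
$$\frac{f_0(x)}{g(x)}\leq\frac{xL(x)}{g(x)}\leq\frac{g(x_j)/x_j}{j\,g(x)/x}\leq\frac1j\to0.$$
Thus $f_0\succ g$.
\end{itemize}

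\emph{Main obstacle: $f_0$ is a gauge function.} The difficulty is that $xL(x)$ drops at each $x_j$ where $L$ jumps down, so $f_0$ need not be nondecreasing. I would fix this by replacing $f_0$ with its nondecreasing minorant $f(x)=\inf_{y\geq x}f_0(y)$, taken on the dyadic points. This preserves $f\leq f_0\leq g$, hence $f\succ g$. It also preserves decrease of $f(x)/x$, because an infimum over $y\geq x$ of $f_0(y)\cdot(1/x)$, with each $f_0(y)/y$ decreasing, can be checked to remain decreasing in $x$.

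The delicate point is the lower bound at the witnesses. It requires $f_0(y)\geq g(x_j)/j$ for all $y\geq x_j$ near $0$. For $y\in(x_{m+1},x_m]$ with $m<j$ this holds, since $f_0(y)\geq\min\{g(x_{m+1}),\,x_{m+1}L\}$ is large compared with $g(x_j)$. To guarantee this I would add to the inductive choice of $x_j$ the requirement that $g(x_j)$ is smaller than all earlier lower bounds of $f_0$. This is possible because $g(x)\to0$ while $g(x_j)>j\,h_i(x_j)$ can still be arranged for arbitrarily small $x_j$. If this bookkeeping becomes awkward, the fallback is to apply Proposition~\ref{2006}: take $\hat f_0$ and compare measures rather than pointwise values.
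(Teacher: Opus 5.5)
\textbf{There is a genuine gap in the final monotonization step, and extra bookkeeping cannot close it.}

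Write $c_j=\frac1j\,g(x_j)/x_j$ for the value of $L$ on $(x_{j+1},x_j]$. The dyadic point $2x_j$ lies in $(x_j,x_{j-1}]$, so for every $x\le x_j$ the minorant satisfies $f(x)\le f_0(2x_j)\le 2c_{j-1}x_j$. Your growth requirement $g(x_j)/x_j\ge j\,g(x_{j-1})/x_{j-1}$ gives $c_{j-1}x_j\le g(x_j)/(j(j-1))$.

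So your lower bound fails exactly in the case $m=j-1$. On $(x_j,x_{j-1}]$, $f_0$ dips to about $c_{j-1}x_j$, which is far below $g(x_j)/j$. This dip is linear in $x_j$ with a slope fixed before $x_j$ is chosen. Hence, for any $h_i$ with $h_i(x)/x\to\infty$, shrinking $x_j$ (or $g(x_j)$, as you propose) only makes things worse.

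Concretely, for $x\in(x_{j+1},x_j]$ you get $f(x)\le\min\{c_jx,\,2c_{j-1}x_j\}\le\sqrt{2c_jc_{j-1}x_j\,x}=\sqrt{x}\cdot\sqrt{2c_{j-1}g(x_j)/j}$. Nothing in your construction prevents choosing $x_j$ with $g(x_j)\le j/(2c_{j-1})$. Then $f\le^*\sqrt{x}$, and conclusion 2 fails whenever $\sqrt{x}\in\Upsilon$, e.g.\ when $\Upsilon$ is the set of gauge functions in $\Gamma$.

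The fallback does not help. Since $f_0(x)/x$ is already decreasing, $\hat f_0=f_0$, and the required conclusions are pointwise anyway.

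\textbf{The missing idea.} If $f$ is nondecreasing, then $f(x)/x$ can grow by at most a factor $2$ per dyadic level as $x$ decreases. So $L$ must not jump; the factor $1/j$ has to be phased in.

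In your setup, put $L(x)=\max\{c_{j-1},\min\{c_j,\,c_jx_j/x\}\}$ on $[x_j,x_{j-1}]$. Then $L$ is still nonincreasing, and $xL(x)=\max\{c_{j-1}x,\,c_jx_j\}$ there. This is nondecreasing, and the pieces match at the endpoints because $c_jx_j=g(x_j)/j\le g(x_{j-1})/(j-1)=c_{j-1}x_{j-1}$.

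Thus $f=\min\{g,xL\}$ is already a gauge function with $f(x)/x$ decreasing, and no minorant is needed. Moreover $f(x_j)=g(x_j)/j$. On $[x_j,x_{j-1}]$ you have $f/g\le\max\{c_{j-1}x_{j-1}/g(x_{j-1}),\,g(x_j)/(j\,g(x))\}\le 1/(j-1)$.

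Equivalently, write $g(2^{-n})=2^{-b_n}$ and set $f(2^{-n})=2^{-b_n-\psi(n)}$. Here $\psi$ increases by $1$ only at levels where $b_{n+1}=b_n$ (infinitely many such levels exist by your Step 1), and only after a witness level for the current requirement.

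\textbf{Two smaller points.}
\begin{enumerate}
\item Hypothesis 1 only gives multiples of a single $h$. It does not give some $k_j\ge^* j\max_{i\le j}h_i$. Handle one $h_{i(j)}$ per stage instead, with every $i$ recurring infinitely often.
\item Your countability assumption is genuinely necessary. If $g(x)/x\to\infty$, then $\Upsilon=\{h: h(x)/g(x)\to 0\}$ satisfies all the hypotheses, but any $f\succ g$ lies in $\Upsilon$.
\end{enumerate}
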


\begin{corollary}
    If $H^g(\mathcal{C}_\Gamma)>0$, then $H^g(\mathcal{C}_\Gamma)$ has to be non $\sigma$-finite.
\end{corollary}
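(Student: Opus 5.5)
We prove the corollary from Proposition~\ref{no_strong_dim} and Lemma~\ref{arith_Cohen}, with $\succ$ read as in \cite{olsen2006exact}: $f\succ g$ means $f(x)/g(x)\to 0$ as $x\to 0^+$. The standard fact we use is that if $f\succ g$ and $H^f(E)>0$, then $H^g(E)=\infty$, since any $\delta$-cover $\{[s_i]\}$ of $E$ satisfies
$$\sum_i g(2^{-|s_i|})\geq \Big(\inf_{x<\delta} g(x)/f(x)\Big)\sum_i f(2^{-|s_i|}),$$
and the infimum tends to $\infty$.

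\emph{Reduction.} Suppose $H^g(\mathcal{C}_\Gamma)>0$ and, toward a contradiction, that $\mathcal{C}_\Gamma=\bigcup_k E_k$ with $H^g(E_k)<\infty$ for every $k$. By Proposition~\ref{2006} we may replace $g$ by $\hat g$ throughout. So we may assume $g(x)/x$ is decreasing near $0$, after modifying $g$ away from $0$, which is harmless. Lemma~\ref{arith_Cohen} (direction $1\to 2$) then gives $g\not\leq^* h$ for every gauge function $h\in\Gamma$.

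\emph{Applying the proposition.} Let $\Upsilon$ be the set of gauge functions in $\Gamma$. We check the two hypotheses of Proposition~\ref{no_strong_dim}.
\begin{enumerate}
\item $\mathrm{id}\in\Upsilon$, because $\mathrm{id}$ is coded by the computable real $x(n)=n$.
\item $\Upsilon$ is closed under $h\mapsto n\cdot h$ up to $\leq^*$. With our coding, $n\cdot f_x\leq f_{x'}$ where $x'(m)=\max(x(m)-\lceil\log_2 n\rceil,0)$, and $x'\leq_T x$. Only eventual behaviour matters, so a finite modification keeps $x'$ nondecreasing and unbounded.
\end{enumerate}
The proposition then yields a gauge function $f$ with $f\succ g$, $f\not\leq^* h$ for all $h\in\Upsilon$, and $f(x)/x$ decreasing.

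\emph{Conclusion.} Applying Lemma~\ref{arith_Cohen} (direction $2\to1$) to $f$ gives $H^f(\mathcal{C}_\Gamma)>0$. Because $H^f$ is countably subadditive, $H^f(E_k)>0$ for some $k$. The standard fact above then gives $H^g(E_k)=\infty$, a contradiction. Hence $H^g(\mathcal{C}_\Gamma)$ is not $\sigma$-finite.

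\emph{Main obstacle.} The delicate step is the coding bookkeeping. Membership in $\Gamma$ is defined via the real $x$ coding $f_x$, so the lemma's hypothesis ``$f\in\Gamma$'' must match the proposition's $\Upsilon$, including the closure under constant multiples checked above. I would also confirm that Lemma~\ref{arith_Cohen} accepts $f$ even though $f$ is not of the form $f_x$; if necessary, replace $f$ by a comparable $f_x$ with $f_x\leq f\leq 2f_x$ near $0$, which changes none of $\succ$, $\not\leq^*$, or positivity of the measure.
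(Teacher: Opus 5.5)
Your proof is correct and follows the paper's argument. You pass to $\hat g$, apply Proposition~\ref{no_strong_dim} to get $f\succ\hat g$ not eventually dominated by any gauge function in $\Gamma$, and contradict $\sigma$-finiteness via Lemma~\ref{arith_Cohen} (the paper cites Theorem~\ref{Cohen_dim}); the only differences are that you phrase the last step contrapositively ($H^f(E_k)>0\Rightarrow H^g(E_k)=\infty$ rather than $H^{\hat g}(A_n)<\infty\Rightarrow H^f(A_n)=0$) and that you explicitly check the hypotheses of the proposition, which the paper leaves implicit.
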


\begin{proof}
For every gauge function $g$ with $H^g(\mathcal{C}_\Gamma)>0$, if $H^g(\mathcal{C}_\Gamma)$ is $\sigma$-finite, $\mathcal{C}_\Gamma=\bigcup_n A_n$ where $H^g(A_n)<\infty$ for every $n$. So 
$$H^{\hat{g}}(A_n)<\infty.$$
From Proposition \ref{no_strong_dim} we have some gauge function $f\succ \hat{g}$ such that for every gauge functions $h\in\Gamma$, $f\not\leq^* h$. Then $$H^f(\mathcal{C}_\Gamma)\leq \sum_n H^f(A_n)=0,$$
which contradicts Theorem \ref{Cohen_dim}.
\end{proof}

From a theorem by Besicovitch\cite{besicovitch1956definition}, if $A$ is analytic and has the above property, then $A$ has no strong dimension. See also recent related work on strong dimensions\cite{slaman2024extending}.

\begin{corollary}
    There is no strong dimension for $\mathcal{C}_\Gamma$.
\end{corollary}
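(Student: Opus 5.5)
The plan is to reduce the statement to the preceding corollary together with the cited theorem of Besicovitch \cite{besicovitch1956definition}. That theorem says that an analytic set $A$ has no strong dimension provided that, for every gauge function $g$, $H^g(A)>0$ forces $H^g(A)$ to be non $\sigma$-finite. So two hypotheses must be checked for $A=\mathcal{C}_\Gamma$: analyticity and this dichotomy.

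First, analyticity. Since $\Gamma$ is countable, there are only countably many dense sets $D\in\Gamma$. By definition, $\mathcal{C}_\Gamma$ is the intersection over these $D$ of the open sets $\bigcup_{\sigma\in D}[\sigma]$. Each of these sets is open, indeed dense open, because $D$ is dense in $(2^{<\omega},\supseteq)$. Hence $\mathcal{C}_\Gamma$ is $G_\delta$, and in particular Borel and analytic. This is the same observation already used in the proof of Lemma \ref{arith_Cohen}.

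Second, the dichotomy. Let $g$ be any gauge function with $H^g(\mathcal{C}_\Gamma)>0$. The previous corollary, obtained from Theorem \ref{Cohen_dim}, Proposition \ref{2006} and Proposition \ref{no_strong_dim} applied with $\Upsilon$ the gauge functions in $\Gamma$, gives that $H^g(\mathcal{C}_\Gamma)$ is not $\sigma$-finite. It is worth checking that this $\Upsilon$ satisfies the hypotheses of Proposition \ref{no_strong_dim}:
\begin{enumerate}
    \item $\text{id}$ is computable, so $\text{id}\in\Gamma$;
    \item if $h\in\Gamma$, then $n\cdot h$ is computable from $h$ and hence lies in $\Gamma$, since $\Gamma$ is downward closed under $\leq_T$. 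Strictly, one should take a gauge of the form $f_x$ coding a function $\geq^* n\cdot h$, but that is still computable in $h$.
\end{enumerate}
With both hypotheses verified, Besicovitch's theorem yields that $\mathcal{C}_\Gamma$ has no strong dimension.

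If strong dimension is instead read in the simpler sense of an exact gauge, that is, some $h$ with $0<H^h(\mathcal{C}_\Gamma)<\infty$, the conclusion is immediate. Such an $h$ would make $H^h(\mathcal{C}_\Gamma)$ positive and finite, hence $\sigma$-finite, contradicting the previous corollary.

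The only real obstacle is definitional. I need to make sure the paper's notion of strong dimension matches the hypothesis in Besicovitch's theorem, namely analyticity together with the implication "positive implies non $\sigma$-finite". If it does not, I will fall back to the direct exact-gauge argument above.
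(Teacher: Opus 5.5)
Your proposal is correct and follows the same route as the paper: $\mathcal{C}_\Gamma$ is $G_\delta$ and hence analytic, and the preceding corollary shows that $H^g(\mathcal{C}_\Gamma)>0$ forces non-$\sigma$-finiteness, so Besicovitch's theorem rules out a strong dimension. Your check that the gauge functions in $\Gamma$ satisfy the hypotheses of Proposition \ref{no_strong_dim} is a detail the paper leaves implicit, and you handle it correctly.
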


\subsubsection*{Random Real Forcing}

The other natural forcing to consider is random real forcing, as this is the dual notion of Cohen forcing.

The following proposition makes random reals less interesting for characterization since the set of random reals has Lebesgue measure $1$.

\begin{proposition}
For every $A\subseteq 2^\omega$ such that $\mu(A)=1$, and every gauge function $f$,
\begin{center}
$H^f(A)=0$ if and only if $\liminf_{x\rightarrow 0+} f(x)/x=0$.
\end{center}
\end{proposition}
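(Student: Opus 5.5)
We prove the two directions separately. Throughout, $\mu$ denotes Lebesgue (uniform) measure on $2^\omega$, and we use that $H^{\mathrm{id}}$ agrees with $\mu$ on $2^\omega$ (here $\mathrm{id}(2^{-n})=2^{-n}$). Recall also that $H^f$ is defined as a limit over covers by cylinders of length at least $m$.

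\emph{Case $\liminf_{x\to0+} f(x)/x=0$.} Take $n_k\to\infty$ with $f(2^{-n_k})\,2^{n_k}\to 0$. Covering all of $2^\omega$ by the $2^{n_k}$ cylinders of length $n_k$ gives total $f$-weight $2^{n_k}f(2^{-n_k})$. This agrees with the first Proposition of Section 2, $H^f(2^\omega)=\liminf_n f(2^{-n})2^n=0$. Since $A\subseteq 2^\omega$ and $H^f$ is monotone, $H^f(A)=0$. This direction does not use $\mu(A)=1$.

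\emph{Case $\liminf_{x\to0+} f(x)/x=c>0$.} Fix $0<c'<c$. There is $N$ with $f(2^{-n})\ge c'2^{-n}$ for all $n\ge N$. Now let $\bigcup_i[s_i]\supseteq A$ be any cover with $|s_i|\ge N$. Then
\[\sum_i f(2^{-|s_i|})\;\ge\; c'\sum_i 2^{-|s_i|}\;\ge\; c'\,\mu\Bigl(\bigcup_i[s_i]\Bigr)\;\ge\; c'\,\mu^*(A)\;=\;c'.\]
Taking the infimum over such covers and letting the length bound go to infinity gives $H^f(A)\ge c'>0$.

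The only point needing care is the measure comparison: a countable union of cylinders has measure at most the sum of their measures (countable subadditivity). This holds even if $A$ is not measurable, because the open set $\bigcup_i[s_i]$ contains $A$, so its measure is at least the outer measure of $A$, which is $1$. There is no real obstacle in either case.
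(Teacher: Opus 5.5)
Your proof is correct and follows essentially the same route as the paper. When $\liminf f(x)/x=0$ you cover by cylinders of a length $n_k$ with $2^{n_k}f(2^{-n_k})$ small; the paper covers $A$ by such cylinders with total Lebesgue weight at most $2$, which amounts to the same thing. When $\liminf>0$ you use $f(x)\geq c'x$ eventually to get $H^f(A)\geq c'\mu^*(A)>0$, exactly as the paper does, and your choice of $c'<c$ and use of outer measure tidy up two small imprecisions in the paper's write-up.
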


\begin{proof}
$\Leftarrow$: If $\liminf_{x\rightarrow 0+} f(x)/x=0$, then there is a strictly increasing sequence $\{n_k\}_{k\in\omega}\subseteq\omega$ such that 
$$\lim_{k\rightarrow\infty}f(2^{-n_k})/(2^{-n_k})=0.$$
Take $\{\sigma_l\}_{l\in\omega}\subseteq 2^{<\omega}$ such that $A\subseteq \bigcup_l[\sigma_l]$, $\sum_l 2^{-|\sigma_l|}\leq 2$ and each $2^{-|\sigma_l|}=2^{-n_k}\leq \varepsilon$ for some $k$. Then 
$$\sum_l f(2^{-|\sigma_l|})\leq 2\varepsilon.$$
$\Rightarrow$: If $\liminf_{x\rightarrow 0+} f(x)/x\geq\varepsilon>0$, then there is some $y\in (0,1)$ such that for every $x\in (0,y)$, $f(x)/x\geq\varepsilon$. So $H^f(A)\geq \varepsilon\cdot\mu(A)>0$.
\end{proof}

\section{Mathias Forcing and Sacks Forcing}

\subsection{Mathias Forcing}

Mathias forcing adds generics as fast-growing functions (in $\omega^\omega$, if in $2^\omega$ this means the real has very sparse $1$'s). P. Cholak, D. Dzhafarov, J. Hirst and T. Slaman studied Mathias forcing in the context of computability theory\cite{cholak2014generics}. We will also define Mathias forcing with reduced complexity, but in a specific way so we have a correspondence in terms of geometric measure theory.\\[4mm]
Let $\mathbb{P}_\mathcal{M}=(P_\mathcal{M},\leq_\mathcal{M})$ be the poset for Mathias forcing, i.e.
    $$P_\mathcal{M}=\{(s,A)\in 2^{<\omega}\times \mathcal{P}(\omega):|A|=\aleph_0 \wedge |s|\leq \min A\}$$
    $$(s,A)\leq_\mathcal{M} (t,B)\Leftrightarrow s\supseteq t\ \wedge\ A\subseteq B\ \wedge\ s-t\subseteq B.$$
For every $(s,A)\in\mathbb{P}_M$ let 
    $$[s,A]=\{x\in 2^\omega:s=x\upharpoonright |s|\wedge x\subseteq s\cup A\}.$$
We say $x\in 2^\omega$ meets some $D\subseteq P_\mathcal{M}$ if there is some $(s,A)\in D$ such that $x\in [s,A]$.\\[2mm]
We will restrict ourselves to Mathias generics for a certain complexity (complexity $\Gamma$).

We identify each Mathias condition $(s,A)$ with a real in $2^\omega$ coding it, and say a density function $\Theta:P_\mathcal{M}\rightarrow P_\mathcal{M}$ is coded by a real $y$ if $\Theta$ can be extended to a continuous function from $2^\omega$ to $2^\omega$ and real $y$ codes this continuous function in some standard way.

\begin{definition}
Given a countable ideal $\Gamma$,
\begin{enumerate}
    \item $D\in\mathcal{D}_\Gamma$ if
    $$\exists x\in \Gamma (x\text{ codes }\Theta\ \wedge\ D=ran(\Theta\upharpoonright \Gamma)).$$
    \item $x\in 2^\omega$ is $\Gamma$-Mathias generic if $x$ meets every $D\in \mathcal{D}_\Gamma$.
\end{enumerate}
\end{definition}

\begin{theorem}
\label{Mathias}
$H^f(\mathcal{M}_\Gamma)>0$ if and only if for every gauge function $g\in\Gamma$, $f\geq^* g$.
\end{theorem}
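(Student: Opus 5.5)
Both directions come from a tree construction. For the forward direction we cover $\mathcal{M}_\Gamma$ by cylinders; for the backward direction we build a uniform tree of generics whose branching is set by $f$.

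\textbf{($\Rightarrow$).} Assume some gauge $g\in\Gamma$ (coded by $x_g\in\omega^\omega$) satisfies $f\not\geq^* g$. Then there are infinitely many $n$ with $f(2^{-n})<g(2^{-n})$. Any single set cannot be used as a parameter, since its complexity must lie in $\Gamma$. We therefore use a density operator computable from $g$, which outputs a condition making the generic very sparse. Given $(s,A)$ with $A\in\Gamma$, let $\Theta(s,A)=(s',A')$, where $s'$ extends $s$ by zeros and $A'\subseteq A$ is thinned. The thinning is chosen so that, once the generic's next element is picked at a point $a\in A'$, all elements of $A'$ above $a$ lie beyond a level $m$ where $g(2^{-m})$ is tiny relative to $2^{-a}$.

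The aim is that every generic meeting the family $\{\operatorname{ran}\Theta_l\}_l$ has few possible initial segments. At the length $m$ above, the number of strings of length $m$ compatible with the relevant conditions is about $\#\{\text{choices}\}$. Summing $f(2^{-m})$ over the resulting cylinders, at levels where $f<g$, should give a total below $2^{-l}$.

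I expect this direction to be the main obstacle, because the levels where $f<g$ are not known to $\Gamma$. My first attempt is this: since $\Gamma$-generics are sparse beyond any $\Gamma$ bound, the set $\{x:x\text{ has at most }k\text{ ones below }m_k\}$ has $2^{\text{poly}}$-many strings of length $m_k$. I would then choose $m_k$ by $g$, so that $g(2^{-m_k})$ times that count tends to $0$. The catch is that $f\geq^* g$ fails only at some levels. To handle this, I would instead use the fact that generics have long zero-blocks starting at every element, exceeding any $\Gamma$-function. Then for each bad level $n$, the generic restricted to $n$ is determined by few ones. This gives an outer measure estimate by a Vitali-type cover across infinitely many bad $n$.

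\textbf{($\Leftarrow$).} Assume $f\geq^* g$ for every gauge $g\in\Gamma$. Enumerate $\Gamma$-codes of density operators $\Theta_n$. Following Lemma \ref{arith_Cohen}, build a uniform perfect tree $T$ with the following properties.
\begin{itemize}
\item Every path lies in $[s_n,A_n]$ for some $(s_n,A_n)\in\operatorname{ran}(\Theta_n\upharpoonright\Gamma)$.
\item $f(2^{-m})\cdot|T\cap 2^m|\geq 1$ for all $m$.
\end{itemize}
Each step is the analogue of Claim \ref{one-step}: given a partial uniform tree whose end-nodes are, say, $K$ nodes, each carrying a condition in $\Gamma$, apply the operators. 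This yields conditions, and we allow splitting only at fixed points of $A$'s, chosen uniformly across nodes by taking the intersection of the reservoirs (or a common infinite set refined in $\Gamma$). Let $h\in\Gamma$ record how many levels are needed to double the branching. The associated gauge (like $f$ in the claim) then lies in $\Gamma$, hence is $\leq^* f$. Because this is domination rather than mere non-domination, the measure bound holds at all large levels, not just one. Corollary \ref{uniform_measure} then gives $H^f([T])>0$, and $[T]\subseteq\mathcal{M}_\Gamma$ by construction.
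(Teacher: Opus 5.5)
\textbf{Forward direction: this is a genuine gap.} Your thinning operator is essentially the paper's: the $(l+1)$-st reservoir point is allowed only once $g$ has dropped below roughly $2^{-2l}$. But you then look for a single $\Gamma$-effective cover of all of $\mathcal{M}_\Gamma$. The obstacle you name, that the levels where $f<g$ are not known to $\Gamma$, is not a real one: only the operator has to lie in $\Gamma$, not the covers.

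The missing step is to treat each $(s,B)\in ran(\Theta\upharpoonright\Gamma)$ separately. Each such $[s,B]$ is a uniform tree with $2^{|B\cap n|}$ nodes at level $n$, and the thinning gives $g(2^{-n})\cdot 2^{|B\cap n|}\to 0$. At the infinitely many $n$ with $f(2^{-n})<g(2^{-n})$, chosen classically and separately for each $[s,B]$, cover $[s,B]$ by its level-$n$ nodes; Corollary \ref{uniform_measure} then gives $H^f([s,B])=0$. Every $\Gamma$-Mathias generic meets $ran(\Theta\upharpoonright\Gamma)$, which is countable, so countable subadditivity gives $H^f(\mathcal{M}_\Gamma)=0$.

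Your substitutes do not work. Once the ones are not confined to a fixed reservoir, the number of admissible strings grows polynomially in the level instead of like $2^{|B\cap n|}$.
\begin{itemize}
\item There are $\sum_{j\le k}\binom{m_k}{j}\ge m_k$ strings of length $m_k$ with at most $k$ ones. So for a computable gauge with $g(2^{-m})\approx 1/\log m$, no choice of $m_k$ makes $g(2^{-m_k})$ times this count tend to $0$.
\item The zero-block version bounds the count by $2^{v}$ only when the bad level lies beyond a $\Gamma$-bound $p(v)$ past the last one $v$. Genericity cannot arrange this, because the bad levels are not in $\Gamma$.
\end{itemize}
No Vitali-type cover is actually produced.

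\textbf{Backward direction.} Keeping the tree uniform and using Corollary \ref{uniform_measure} is a different route from the paper's. It can be made to work, but not by intersecting reservoirs. The outputs of $\Theta_n$ at different end-nodes can be pairwise disjoint. For example, a computable $\Theta$ that keeps the even- or the odd-indexed elements of $A$ according to the parity of the number of ones in the stem gives disjoint reservoirs to siblings.

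The repair has three parts.
\begin{itemize}
\item Apply $\Theta_n$ to the end-nodes one at a time, each time feeding in the current common reservoir, so the reservoirs form a decreasing chain. Then pad all stems with zeros to a common length.
\item State the invariant for the infinite tree $U$ spanned by the current stems with all future splits in the common reservoir: $f(2^{-m})\cdot|U\cap 2^m|\ge 1$ for all $m$. This needs $f$ rescaled so that $f\geq \mathrm{id}$, as in the paper's first claim.
\item Take the one-step gauge to be the maximum over all delays, where a delay is the number of further reservoir points split at before refining. This maximum lies in $\Gamma$ and tends to $0$, because long delays agree with $U$ on long initial segments. Then choose a delay beyond the threshold given by $f\geq^*$ this maximum.
\end{itemize}

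The paper avoids uniformity altogether. It lets each condition keep its own reservoir (finite M-sets) and works with the non-uniform trees $T_F$. It replaces level counts by the mass gauge $f_{T_F}$ of Proposition \ref{smalldim_lemma} and concludes by compactness. Once repaired, your version trades that device for the simpler uniform count formula.
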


\begin{proof}
$\Rightarrow$: If there is some $g\in \Gamma$ such that $f\not\geq^* g$, we want to find some $x$ coding $\Theta$ such that
$$\Theta:(s,A)\mapsto (s,B),$$
where $H^f([s,B])=0$ and $x\leq_T g$.

Since $[s,B]$ is a uniform tree, from Corollary \ref{uniform_measure} we know on level $n$ of the tree we only need to monitor the number of branches on the level and the value $g(2^{-n})$. The strategy is: do not split until $g$ drops under a certain value, then split.

We describe such an algorithm:
Suppose $B\upharpoonright n$ is defined and $l:=|\{k<n:B(k)=1\}|$.
$$B(n)=1\text{ if }A(n)=1\text{ and }g(2^{-(n+1)})\cdot 2^l< 2^{-l},$$
otherwise $B(n)=0$.

Since $f\not\geq^* g$, there are infinitely many $n\in\omega$ such that $f(2^{-n})<g(2^{-n})$.If $\lim_{n\rightarrow\infty}g(2^{-n})\cdot 2^{|\{k<n:B(k)=1\}|}=0$, then
$$\liminf_{n\rightarrow\infty}f(2^{-n})\cdot 2^{|\{k<n:B(k)=1\}|}=0.$$
So $H^f([(s,B)])=0$ by Corollary \ref{uniform_measure}. $H^f(\mathcal{M}_\Gamma)\leq H^f([ran(\Theta\upharpoonright\Gamma)])=0$ since $\Gamma$ is countable.
\\[2mm]
$\Leftarrow$: If for every $g\in\Gamma$, $f\geq^* g$, we construct a perfect tree $S$ such that $[S]\subseteq \mathcal{M}_\Gamma$ and $H^f([S])>0$. The idea is still to meet dense sets one by one without measure dropping, but this time we need to split within some Mathias condition, and ensure that the splitting for meeting the next dense set will not decrease the measure as well. We start from condition $(0,\omega)$, and we need the next claim to ensure the first step of the induction would work.

\begin{claim}
If for every $g\in\Gamma$, $f\geq^* g$, there is a gauge function $h$ such that
\begin{enumerate}
    \item $h(x)\geq x$ for every $x=2^{-n}$.
    \item For every $g\in\Gamma$, $h\geq^* g$.
    \item For every $A$, $H^h(A)>0$ if and only if $H^f(A)>0$.
\end{enumerate}
\end{claim}

\begin{proof}[Proof of the Claim]
Since $f(x)\geq^* x$, there is some $N\in\omega$ such that for all $n\geq N$,
$$f(2^{-n})\geq 2^{-n}.$$
Let $s=\inf \{f(2^{-n})/2^{-n}:n\in\omega\}$, then $s>0$, and for all $n\in\omega$
$$\frac{f(2^{-n})}{s}\geq 2^{-n}.$$
For every $A$, $H^{f/s}(A)>0$ if and only if $H^f(A)>0$. Let $h=f/s$.
\end{proof}

We can work with $h$ instead of $f$ now. We need some more definitions.

\begin{definition}
\begin{enumerate}
    \item For $a\in\omega$, a finite ($a$-)M-set $F$ is a finite subset of $P_M$ such that for every $(s,A),(t,B)\in F$, $|s|=|t|=a$ and $s,t$ incomparable.
    \item Given $a<b$, and finite $a$-M-set $F$, finite $b$-M-set $F^*$, $F^*$ extends $F$ if
    $$\{s:|s|=b\wedge \exists (t,B)\in F\ (s\supseteq t\wedge s-t\in B)\}=\{s:\exists A\ (s,A)\in F^*\}.$$
    \item For every finite M-set $F$, let tree $T_F$ be
    $$T_F=\{t\in 2^{<\omega}:\exists (s,A)\in F\ (t\subseteq s\vee (s\subseteq t\ \wedge\ t-s\in A))\}.$$
\end{enumerate}
See the picture below.

\begin{figure}[H]
     \centering
     \begin{subfigure}[b]{0.3\textwidth}

\begin{tikzpicture}
\draw (0,0) -- (-1,2);
\draw (0,0) -- (1,2);
\draw (-1,2) -- (1,2);
\filldraw (-0.5,2) circle (1pt);
\draw (-0.5,2) -- (-0.5,3);
\node[label={[xshift = -0.5cm, yshift= -0.1cm]\scriptsize{$(s,A)$}}] at (-0.5,2) {};
\filldraw (0,2) circle (1pt);
\filldraw (0.8,2) circle (1pt);
\foreach \x in {-0.3,0.3}
\draw (-0.5,3) -- (-0.5+\x, 3.3);
\foreach \x in {-0.3,0.3}
\draw (-0.5+\x,3.3) -- (-0.5+\x,4);
\foreach \x in {-0.3,0.3}
\foreach \y in {-0.2,0.2}
\draw (-0.5+\x,4) -- (-0.5+\x+\y, 4.2);
\end{tikzpicture}         
         \caption{$T_F$}
     \end{subfigure}
\hfill
     \begin{subfigure}[b]{0.3\textwidth}

\begin{tikzpicture}
\draw (0,0) -- (-1,2);
\draw (0,0) -- (1,2);
\draw (-1,2) -- (1,2);
\filldraw (-0.5,2) circle (1pt);
\node[label={[xshift = -0.5cm, yshift= -0.1cm]\scriptsize{$(s,A)$}}] at (-0.5,2) {};
\draw (-0.5,2) -- (-1,4);
\node[label={[xshift = -0.5cm, yshift= -0.1cm]\scriptsize{$(t,B)$}}] at (-1,4) {};
\draw (-1,2) -- (-2,4);
\draw (1,2) -- (2,4);
\draw (-2,4) -- (2,4);
\end{tikzpicture}
         \caption{Extend}
     \end{subfigure}
\end{figure}
\end{definition}

Recall from Proposition \ref{smalldim_lemma}, for every perfect tree $T$, there is a gauge function $f_T$ such that\\
for every open cover $\bigcup_i [\sigma_i]\supseteq [T]$,
$$\sum_{i\in\omega} f_T(2^{-|\sigma_i|})\geq 1.$$
We will prove the following one step lemma.

\begin{claim}\label{one-step-M}
Let $h$ satisfy our assumption. For every finite M-set $F\in\Gamma$ that $h\geq f_{T_F}$ and every $x\in\Gamma$ that codes a density operator $\Theta$, there is a finite M-set $F^*\in\Gamma$ extending $F$ such that
\begin{enumerate}
    \item $h\geq f_{T_{F^*}}$.
    \item for every $(s,A)\in F^*$, there exists $(t,B)\in ran(\Theta)$ such that $(s,A)\leq_M (t,B)$.
\end{enumerate}
\end{claim}

\begin{proof}[Proof of the claim]
We build a sequence of finite M-set $\{H_n:n\in\omega\}$. Let $F$ be a finite $a$-M-set.
$$H_n=\{\Theta(s,A):\exists (t,B)\in F (s=t\cup B\upharpoonright (0, |t|+n) \wedge\ A=B-s)\}.$$
So $H_0=F$. Let $f_n:=f_{T_{H_n}}$, $f(x):=\max f_n(x)$. There are two things we need to check.
\begin{enumerate}
    \item $\lim_{x\rightarrow 0+}f(x)=0$,
    \item $f\in\Gamma$.
\end{enumerate}
For every $n\in\omega$, $T_{H_n}$ and $T_F$ agree on the first $a+n$ levels, by 2. in Proposition \ref{smalldim_lemma}, $\forall k<a+n$, $f_n(2^{-k})=f_0(2^{-k})$. For every $\varepsilon>0$, there is some $K\in\omega$ such that $f_0(2^{-K})\leq\varepsilon$, so for every $n>K$, $f_n(2^{-K})\leq\varepsilon$, there exists some $L\geq K$ such that for every $n\leq k$, $f_n(2^{-L})\leq\varepsilon$, so for every $n$, $f_n(2^{-L})\leq\varepsilon$. We showed
$$\lim_{x\rightarrow 0+}f(x)=0.$$
The proof of $f\in\Gamma$ is by designing some algorithm, we omit it here.

So $f\in\Gamma$ is a gauge function, by assumption $h\geq^* f$. There is some $J>0$ such that for every $n\geq J$, $h(2^{-n})\geq f(2^{-n})$. Since for every $n<J$, $f_J(2^{-n})=f_0(2^{-n})$, $h$ dominates $f_J$. Let $F^*=H_J$, $F^*$ satisfies the above conditions.
\end{proof}

Fix an enumeration of elements of $\Gamma$ that code density operators as $\{x_n:n\in\omega\}$. Start with $F_0=\{(0,\omega)\}$, if $F_n$ is defined, let $x=x_n$, we get $F_{n+1}=F^*$ from applying the above claim. Let $T=\{s\in 2^{<\omega}: \exists n\exists (t,B)\in F_n(s\subseteq t)\}$.

\begin{claim}
$H^h([T])>0.$
\end{claim}

\begin{proof}[Proof of the claim]
Since $[T]$ is a compact set, for every $\{s_i:i\in\omega\}$ such that $[T]\subseteq \bigcup_i [s_i]$, there is a finite subset of it that covers $[T]$ as well. For such finite set $\{s_i:i<N\}$, let $m=\min \{k:\exists (s,A)\in F_k\forall i<N\ (|s_i|<|s|)\}$, then $\{s_i:i<N\}$ covers $T_{F_m}$ as well. Since $h$ dominates $f_{T_{F_m}}$, $\sum_{i<N} h(2^{-|s_i|})\geq 1$.
\end{proof}

\end{proof}

\subsection{Sacks Forcing}

Now we consider Sacks forcing, which is forcing with perfect trees. Sacks forcing is very similar to Mathias forcing if we view $(s,A)$ as a perfect tree with stem $s$ extended by possibly several $0$s and splits on the location marked by $A$. We will only give an outline of the proof here, pointing out what is different from Mathias forcing.

Let $\mathbb{P}_\mathcal{S}=(P_\mathcal{S},\leq_\mathcal{S})$ be the poset for Sacks forcing, where
    $$P_\mathcal{S}=\{T\subseteq 2^{<\omega}:T\text{ is a perfect tree}\},$$
    $$T\leq_\mathcal{S} R\longleftrightarrow T\subseteq R.$$
We say $x\in 2^\omega$ meets some $D\subseteq P_\mathcal{S}$ if there is some $T\in D$ such that $x\in [T]$. 

We identify a tree with a real in $2^\omega$ coding it, and say a density function $\Theta:P_\mathcal{S}\rightarrow P_\mathcal{S}$ is coded by a real $y$ if $\Theta$ can be extended to a continuous function from $2^\omega$ to $2^\omega$ and real $y$ codes this continuous function in some standard way.

\begin{definition}
Given a countable ideal $\Gamma$,
\begin{enumerate}
    \item $D\in\mathcal{D}_\Gamma$ if
    $$\exists x\in \Gamma (x\text{ codes }\Theta\ \wedge\ D=ran(\Theta\upharpoonright \Gamma)).$$
    \item $x\in 2^\omega$ is $\Gamma$-Sacks generic if $x$ meets every $D\in \mathcal{D}_\Gamma$. Let $S_\Gamma$ be the set of $\Gamma$-Sacks generics.
\end{enumerate}
\end{definition}

\begin{theorem}
\label{Sacks}
$H^f(S_\Gamma)>0$ if and only if for every gauge function $g\in\Gamma$, $f\geq^* g$.
\end{theorem}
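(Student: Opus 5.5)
We follow the proof of Theorem \ref{Mathias}, replacing Mathias conditions by perfect trees. The two directions are handled separately.

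\emph{The direction $\Rightarrow$.} Suppose some gauge function $g\in\Gamma$ has $f\not\geq^* g$. We build a density operator $\Theta$ coded by a real computable from $g$ (hence in $\Gamma$) such that $H^f([\Theta(T)])=0$ for every perfect $T$.

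Given $T$, we let $\Theta(T)$ be a uniform-style subtree $R\subseteq T$ built level by level through the index function $\sigma_T$. We keep a single branch through each current node and only allow a new splitting of $T$ to be used once $g$ has dropped far enough. Concretely, if $R$ currently has $2^l$ branches at level $n$, we permit the next splitting (taken at the next splitting nodes of $T$ above each branch) only when $g(2^{-n'})\cdot 2^{l+1}<2^{-l}$ at the level $n'$ where those splits occur. To make the levels uniform, we pass to the next level at which \emph{all} current branches have split in $T$; this is continuous in $T$ and computable from $g\oplus T$.

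Then $\lim_n g(2^{-n})\cdot|\{s\in R:|s|=n\}|=0$. Since $f<g$ infinitely often, we get $\liminf_n f(2^{-n})\cdot|R\cap 2^n|=0$. When $R$ is not literally uniform, we cover $[R]$ at those levels by its level-$n$ nodes directly, which gives the same bound as Corollary \ref{uniform_measure}. Hence $H^f([R])=0$, and because $\Gamma$ is countable, $H^f(S_\Gamma)\le\sum_{T\in\Gamma}H^f([\Theta(T)])=0$.

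\emph{The direction $\Leftarrow$.} Assume $f\geq^* g$ for all $g\in\Gamma$. First normalize $f$ to $h$ with $h(2^{-n})\ge 2^{-n}$ as in the Claim in the proof of Theorem \ref{Mathias}.

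Define a finite S-set to be a finite set of perfect trees in $\Gamma$ whose stems have a common length $a$ and are pairwise incomparable. Set $T_F=\bigcup F$ and fix the gauge $f_{T_F}$ from Proposition \ref{smalldim_lemma}. $F^*$ extends $F$ if it replaces each $R\in F$ by the subtrees $R_\tau$, for the nodes $\tau\in R$ at a common larger level $b$, with $R_\tau\le \Theta(R_\tau')$ for some suitable $R'_\tau$.

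The one-step claim is proved exactly as Claim \ref{one-step-M}. Let $H_n$ consist of $\Theta(R\upharpoonright\tau)$ for $R\in F$ and $\tau\in R$ at level $a+n$, where $R\upharpoonright\tau$ is the subtree of nodes comparable with $\tau$. Because $T_{H_n}$ agrees with $T_F$ up to level $a+n$, part 2 of Proposition \ref{smalldim_lemma} shows that $f:=\max_n f_{T_{H_n}}$ is a gauge function, and it is computable from $F$ and the code of $\Theta$, so $f\in\Gamma$. Hence $h\ge f$ beyond some $J$, and $F^*=H_J$ satisfies $h\ge f_{T_{F^*}}$. Iterating over an enumeration of the density operators in $\Gamma$ gives a tree $T$ with $[T]\subseteq S_\Gamma$, and the compactness argument from the Mathias proof gives $H^h([T])\ge 1$.

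The main obstacle is the $\Rightarrow$ direction. Arbitrary perfect trees do not split uniformly, so $\Theta$ must be continuous and uniform in $T$ while still forcing the level counts below $g$. Taking, at each stage, the first level where every live branch has reached a splitting node handles this, and computing with $g$ is enough.
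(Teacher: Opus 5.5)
Your proposal is correct and follows essentially the paper's route. In the forward direction you use a $g$-computable density operator that delays each new splitting of $T$ until $g$ has dropped enough. This gives $g(2^{-n})\cdot|R\cap 2^n|\to 0$, so covering by level-$n$ nodes at the infinitely many $n$ with $f(2^{-n})<g(2^{-n})$ yields $H^f([R])=0$. In the reverse direction you transplant the Mathias one-step lemma to finite S-sets, with $\max_n f_{T_{H_n}}\in\Gamma$ and the compactness bound.

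One small fix is needed. Define finite S-sets with stems of length $\ge a$, as the paper does, rather than of a common length $a$. The trees $\Theta(R\upharpoonright\tau)$ in $H_n$ generally have stems of different lengths, and nothing in the argument uses equal stem lengths.
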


\begin{proof}
$\Rightarrow:$ Suppose there is some gauge function $g\in\Gamma$ such that $f\not\geq^* g$. Given every perfect tree $T$, we can find a perfect tree $S\subseteq T$ computably in $g$ such that
$$H^g([S])=0.$$
The algorithm is basically the same as in Mathias forcing. In Mathias forcing, we work with condition $(s,A)$ and only need to specify the level of splits. Here, given a perfect tree, we always go to the leftmost path until we reach a level $n$ where the value $g(2^{-n})$ drops below $2^{-2k}$, then we allow the $k$th splitting above that level (i.e. each branch is allowed to split once), then follow the leftmost path on both of the splits until $g(2^{-n})$ drops below $2^{-2(k+1)}$, and allow splitting again.

The construction yields $\lim_{n\rightarrow\infty}g(2^{-n})\cdot |\{s\in S:|s|=n\}|=0$, and $f(2^{-n})<g(2^{-n})$ for infinitely many $n$, so
$$\liminf_{n\rightarrow\infty}f(2^{-n})\cdot |\{s\in S:|s|=n\}|=0.$$
So $H^f([S])=0$.

The algorithm gives us a continuous function $\Theta:2^\omega\rightarrow 2^\omega$ coded by a real in $\Gamma$ that serves as a density operator. $S_\Gamma\subseteq \bigcup_{S\in ran(\Theta\upharpoonright \Gamma)}[S]$, and $ran(\Theta\upharpoonright\Gamma)$ is countable, so 
$$H^f(S_\Gamma)\leq\sum_{S\in ran(\Theta\upharpoonright \Gamma)} H^f([S])=0.$$

$\Leftarrow:$ This direction is totally parallel to Mathias forcing. For every $a\in\omega$, we define a finite $a$-S-set $F$ to be a finite subset of $P_S$ such that for every $T_0,T_1\in P_S$, the stem of $T_0$ and the stem of $T_1$ are incomparable, and both have length $\geq a$. We have the one step lemma in Mathias forcing that given every finite S-set $F$, we can extend it to a finite S-set $F^*$ such that we meet the next dense set while $h$-measure of the tree is still large.
\end{proof}

\section{Discussions}

We are trying to find a correspondence between forcings and the characterizations of gauge functions for them. We have seen that the characterization is about dominance. There are also patterns of dominance for generic reals themselves.

The forcings we consider give generic reals in $2^\omega$. When we say dominance, we view every $x\in 2^\omega$ with $x^{-1}(\{1\})$ infinite as $\pi\in\omega^\omega$ defined as $\pi(0)=$ number of $0$'s before the first $1$, $\pi(n)=$ number of $0$'s between the $n$th $1$ and $n+1$th $1$.

\begin{definition}
For $x,y\in\omega^\omega$,
\begin{enumerate}
    \item $x$ dominates $y$ ($x\geq y$) if for every $n\in\omega$, $x(n)\geq y(n)$.
    \item $x$ eventually dominates $y$ ($x\geq^* y$) if there is some $N\in\omega$ such that for every $n\geq N$, $x(n)\geq y(n)$.
\end{enumerate}    
\end{definition}

All the propositions in this section are easy density arguments so we omit the proofs here.

\begin{proposition}
For every $x\in \mathcal{C}_\Gamma$ and every $y\in\Gamma$, $x\not\leq^* y$.
\end{proposition}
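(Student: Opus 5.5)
We prove the statement by a density argument. Fix $x\in\mathcal{C}_\Gamma$ and $y\in\Gamma$, where $x$ is viewed as $\pi_x\in\omega^\omega$ via the coding at the start of this section. We show that for every $N$ there is some $n\geq N$ with $\pi_x(n)>y(n)$, which is exactly $x\not\leq^* y$. First we note that $x^{-1}(\{1\})$ is infinite, so that $\pi_x$ is total. For each $m$, the set of strings containing at least $m$ ones is computable and dense, hence lies in $\Gamma$, and $x$ meets all of them.

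Next, for each $N\in\omega$ we define a set $D_N\subseteq 2^{<\omega}$. A string $\sigma$ is in $D_N$ if there is some $n\geq N$ such that $\sigma$ contains at least $n+1$ ones, and between the $n$th and $(n+1)$th one (or before the first one, if $n=0$) it has more than $y(n)$ zeros.

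Each $D_N$ is dense. Given any $t\in 2^{<\omega}$, let $k$ be the number of ones in $t$ and choose $n=\max(N,k)$. Extend $t$ by appending ones until it has exactly $n$ ones; if $t$ already has $n=k$ ones, append nothing. Then append $y(n)+1$ zeros followed by a single $1$. The resulting string extends $t$ and lies in $D_N$.

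Membership in $D_N$ is decidable from $y$ uniformly in $N$: we only read off the block lengths of $\sigma$ and compare them with finitely many values of $y$. So $D_N\leq_T y$, and since $\Gamma$ is downward closed, $D_N\in\Gamma$. As $x$ is $\Gamma$-Cohen generic, it meets each $D_N$, so some initial segment of $x$ lies in $D_N$. That initial segment witnesses some $n\geq N$ with $\pi_x(n)>y(n)$. Because this holds for every $N$, we conclude $x\not\leq^* y$.

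The only point needing care is the indexing convention for $\pi$. The "$n$th" block in the paper's definition is slightly ambiguous (it could be $0$- or $1$-indexed), so the extension step must append exactly the right number of ones before the long block of zeros. This is bookkeeping, and any off-by-one error is absorbed by choosing $n$ larger than $N$ and $k$.
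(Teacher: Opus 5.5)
Your proof is correct and is the easy density argument the paper has in mind (the paper omits the proof): each $D_N$ is dense and computable from $y$, hence lies in $\Gamma$, so a $\Gamma$-Cohen generic meets all of them and $\pi_x(n)>y(n)$ for infinitely many $n$. Your closing claim that an off-by-one slip would be ``absorbed'' by taking $n$ large is not actually true for non-monotone $y$, but it does not matter, because your $D_N$ is defined directly by the paper's (1-indexed) block convention and needs no such correction.
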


We can see from the above proposition and Theorem \ref{Cohen_dim} that the gauge functions making $\mathcal{C}_\Gamma$ measure positive are behaving similar to $\Gamma$-Cohen reals themselves. We look into Mathias forcing and Sacks forcing.

\begin{proposition}
For every $x\in \mathcal{M}_\Gamma$ and every $y\in\Gamma$, $x\geq^* y$.
\end{proposition}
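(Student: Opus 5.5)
We prove the statement by a density argument: for each $y\in\Gamma$ we write down a density operator, coded by a real in $\Gamma$, whose range forces the generic to grow faster than $y$. First we may assume $y$ is non-decreasing, replacing it by $y'(n)=\max_{k\leq n}y(k)$. This $y'$ is computable from $y$, hence lies in $\Gamma$, and $x\geq^* y'$ implies $x\geq^* y$.

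Define $\Theta(s,A)=(s,B)$, where $B\subseteq A$ is thinned out greedily as follows. Enumerate $A$ in increasing order. Put $a\in A$ into $B$ exactly when
\[
a-b>y'(j+1)
\]
where $b$ is the last element already placed in $B$, or the last $1$ of $s$ if $B$ is still empty, and $j$ is the number of $1$'s in $s$ together with those already in $B$. The first element is handled the same way, measuring the gap from the end of $s$ (or from $0$) and requiring it to exceed $y'(\text{index})$.

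Since $A$ is infinite, $B$ is infinite. We have $B\subseteq A$ and $\min B\geq\min A\geq|s|$, so $(s,B)\in P_{\mathcal M}$ and $(s,B)\leq_{\mathcal M}(s,A)$. The map $\Theta$ is continuous: each bit of $B$ is determined by finitely many bits of $(s,A)$ together with $y'$. So $\Theta$ is coded by a real computable from $y'$, which lies in $\Gamma$. Therefore $D=ran(\Theta\upharpoonright\Gamma)\in\mathcal D_\Gamma$.

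If $x\in\mathcal M_\Gamma$, then $x\in[s,B]$ for some $(s,B)=\Theta(s,A)$. Then $x\subseteq s\cup B$ and $x\upharpoonright|s|=s$. The difficulty is that $x$ may omit elements of $B$, so consecutive $1$'s of $x$ beyond $s$ need not be consecutive in $B$. The greedy rule still handles this. Let $b<b'$ be consecutive $1$'s of $x$ with $b'\in B$, and let $n$ be the index of the gap between them. The index $j$ used when $b'$ was admitted to $B$ is at least $n$, because every $1$ of $x$ before $b'$ lies in $s\cup B$ below $b'$. Also, the element of $B$ immediately before $b'$ is $\geq b$. Using that $y'$ is non-decreasing,
\[
\pi_x(n)\geq b'-b-1\geq y'(j+1)\geq y'(n),
\]
up to an off-by-one shift that the $+1$ in the rule absorbs.

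Hence $\pi_x(n)\geq y(n)$ for all $n$ beyond the finitely many gaps that occur inside $s$. So $x\geq^* y$, where $x$ is identified with $\pi$ as in this section.

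The main obstacle is this last step: fixing the indexing conventions so that skipping elements of $B$ can only enlarge the gaps of $x$ and cannot misalign them with $y$. Taking $y'$ non-decreasing and charging each gap against a count of $1$'s that can only overestimate $n$ settles it.
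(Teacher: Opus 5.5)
Your argument is correct. It is the density argument the paper has in mind, though the paper omits that proof as easy: you thin $A$ greedily against the non-decreasing majorant $y'$, and because the admission index $j$ overcounts the gap index $n$, generics that skip elements of $B$ are handled correctly. One point is left implicit: every $\Gamma$-Mathias generic has infinitely many $1$'s, which is needed for $\pi_x$ to be defined. This follows, for each $k$, from the computable density operator that moves $\min A$ into the stem until the stem contains at least $k$ ones.
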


\begin{proposition}
For every $x\in \mathcal{S}_\Gamma$, there is some $y\in\Gamma$ such that $x\leq y$.
\end{proposition}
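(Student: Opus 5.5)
The plan is the usual density argument. I would build a density operator $\Theta$ on $P_\mathcal{S}$, coded by a computable real, such that for every perfect tree $T$, the tree $\Theta(T)\subseteq T$ is perfect and all paths through it share a gap bound $y_T\leq_T T$. Then take $x\in\mathcal{S}_\Gamma$ and set $D=ran(\Theta\upharpoonright\Gamma)\in\mathcal{D}_\Gamma$. Since $x$ meets $D$, there is some $T\in\Gamma$ with $x\in[\Theta(T)]$, and $y=y_T\leq_T T$ lies in $\Gamma$ because $\Gamma$ is downward closed. Note also that every path of $\Theta(T)$ will contain infinitely many $1$'s, so the identification of $x$ with $\pi\in\omega^\omega$ makes sense.

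To construct $\Theta(T)$, I would build finite levels $L_0,L_1,\dots$ of nodes of $T$ in which every node ends in $1$, except the root. Put $L_0=\{\emptyset\}$. Given $\tau\in L_n$, search $T$ for the least splitting node $\rho\supseteq\tau$. Put $\rho\widehat{}1$ into $L_{n+1}$. Then search for the least splitting node $\rho'\supseteq\rho\widehat{}0$ and put $\rho'\widehat{}1$ into $L_{n+1}$. These two nodes are incomparable, both end in $1$, and both lie in $T$. Let $\Theta(T)$ be the downward closure of $\bigcup_n L_n$.

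This tree is perfect and contained in $T$. Each node of $L_{n+1}$ properly extends a node of $L_n$ and contains at least one new $1$ beyond it. Hence every path $z\in[\Theta(T)]$ passes through some node of $L_{n+1}$, so $z$ has at least $n+1$ ones before position $\ell_{n+1}:=\max\{|\sigma|:\sigma\in L_{n+1}\}$. The gap $\pi_z(n)$ is at most the position of the $(n+1)$st $1$, so setting $y_T(n)=\ell_{n+1}$ gives $\pi_z\leq y_T$ for every $n$, not just eventually. Every step of the construction is a search in $T$ that always terminates because $T$ is perfect. So $(L_n)_n$ and $y_T$ are uniformly computable from $T$, and $\Theta$ extends to a continuous map coded by a computable real.

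The main point of care is making $\Theta$ a legitimate continuous density operator in the paper's coding sense, namely total on perfect trees with each output bit determined by a finite part of $T$. This is where I would check the details. It holds because deciding membership of a string $\sigma$ in $\Theta(T)$ only requires running the construction until the levels exceed $|\sigma|$, which uses finitely much of $T$. Everything else is immediate from the construction.
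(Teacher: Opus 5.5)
Your proof is correct and is the routine density argument the paper has in mind but omits. You computably thin each perfect $T$ to a perfect subtree $\Theta(T)$ whose nodes in $L_{n+1}$ all carry at least $n+1$ ones, so every path is dominated everywhere by $y_T\leq_T T$, which lies in $\Gamma$.

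One detail you leave implicit is that $\Theta$ extends to a total continuous map on $2^\omega$, as the coding convention literally requires; this is easily supplied. Reading ``least'' as ``shortest'', your construction determines $\Theta(T)\cap 2^{\le m}$ from $T\cap 2^{\le m+1}$ alone, by running it with every splitting search cut off at level $m+1$. Hence $\Theta$ is uniformly continuous, and this truncated procedure gives a computable total extension.
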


From Theorem \ref{Mathias} and Theorem \ref{Sacks}, we can see that the behavior of the gauge functions making $\mathcal{M}_\Gamma$ and $\mathcal{S}_\Gamma$ measure positive are the same as $\Gamma$-Mathias reals, but $\Gamma$-Sacks reals have opposite behavior in terms of dominance. We do not know if it is still possible to find a general way to link the gauge profile of a set with the behavior of the reals in the set, if the set satisfies some closure properties and the reals in the set behave in similar ways.

\bibliographystyle{plain}
\bibliography{ref}

\end{document}